\newtheorem{thm}{{{Theorem}}}[section]
\newtheorem{prop}[thm]{{Proposition}}
\newtheorem{lem}[thm]{{Lemma}}
\newtheorem{cor}[thm]{{Corollary}}
\newtheorem{exa}[thm]{{Example}}
\newtheorem{remark}[thm]{Remark}
\numberwithin{equation}{section}
\newcommand{\vol}{\mathrm{vol}\,}
\newcommand{\Q}{{\mathbb{Q}}}
\newcommand{\R}{{\mathbb{R}}}
\newcommand{\C}{{\mathbb{C}}}
\newcommand{\A}{{\mathbb{A}}}
\newcommand{\Z}{{\mathbb{Z}}}
\newcommand{\N}{{\mathbb{N}}}
\newcommand{\cS}{{\mathcal{S}}}
\newcommand{\cL}{{\mathcal{L}}}
\begin{document}
\thispagestyle{empty}

\title[Central limit theorem for Hecke eigenvalues]{Central limit theorem for Hecke eigenvalues}
\author{Henry H. Kim, Satoshi Wakatsuki and Takuya Yamauchi}
\date{\today}
\thanks{The first author is partially supported by NSERC grant \#482564. 
The second author is partially supported by JSPS KAKENHI Grant Number (B) No.21H00972. 
%The third author is partially supported by JSPS KAKENHI Grant Number (B) No.19H01778.
}
\subjclass[2010]{11F46, 11F70, 22E55, 11R45}
\address{Henry H. Kim \\
Department of mathematics \\
 University of Toronto \\
Toronto, Ontario M5S 2E4, CANADA \\
and Korea Institute for Advanced Study, Seoul, KOREA}
\email{henrykim@math.toronto.edu}

\address{Satoshi Wakatsuki \\
Faculty of Mathematics and Physics, Institute of Science and Engineering\\
Kanazawa University\\
Kakumamachi, Kanazawa, Ishikawa, 920-1192, JAPAN}
\email{wakatsuk@staff.kanazawa-u.ac.jp}

\address{Takuya Yamauchi \\
Mathematical Inst. Tohoku Univ.\\
 6-3,Aoba, Aramaki, Aoba-Ku, Sendai 980-8578, JAPAN}
\email{takuya.yamauchi.c3@tohoku.ac.jp}

\subjclass[2010]{Primary 11N75, Secondary 11F46}

\keywords{Sato-Tate distribution, Hecke eigenvalues, Central limit theorem}

\maketitle

\begin{abstract} 
In this paper, we obtain the central limit theorem of Hecke eigenvalues in very general setting of split simple algebraic groups over $\Q$, using irreducible characters of compact Lie groups.  
\end{abstract}

%\tableofcontents

%\bigskip
\section{Introduction}
Let $S_k$ be the space of holomorphic cusp forms of weight $k$ with respect to $SL_2(\Bbb Z)$.  
Let $\mathcal F_k$ be the set of Hecke eigen newforms in $S_k$. 
For $f\in \mathcal F_k$, let 
$\pi=\pi_f$ be the cuspidal automorphic representation of $GL_2(\Bbb A)$ attached to $f$, and $L(s,\pi)=\sum_{n=1}^\infty a_f(n) n^{-s}$ so that $a_f(p)=2\cos\theta_f(p)$ for each prime $p$. Then
Nagoshi \cite{N} obtained the central limit theorem on the distribution of $\{a_f(p)\}$. 
Namely, for any bounded continuous function $\phi: \Bbb R\rightarrow \Bbb R$, if $\frac {\log k}{\log x}\to\infty$ as $x\to\infty$,

\begin{equation*}
\frac 1{|\mathcal F_k|} \sum_{f\in \mathcal F_k} \phi\left(\frac {\sum_{p\leq x} a_{f}(p)}{\sqrt{\pi(x)}}\right)
\longrightarrow \frac 1{\sqrt{2\pi}}\int_{-\infty}^\infty \phi(t)e^{-\frac {t^2}2}\, dt, \quad x\to\infty.
\end{equation*}

In our earlier paper \cite{KWY3}, we proved similar results for holomorphic Siegel modular forms by direct calculations of certain integrals.
In this paper, we extend the previous works to general setting of split simple algebraic groups, using irreducible characters of compact Lie groups. 
The key observation is that irreducible characters are orthonormal. We use the well-known fact that irreducible representations of a complex Lie group are identified with representations of its maximal compact subgroup, and we use them interchangeably. 
% In this paper, we assume that all irreducible representations are self-dual.
%which are the simplest cases of Heckman-Opdam polynomials. 

Our results encompass previous known results \cite{KWY3,LNW,N}. For example, for $PGL_n$, the degenerate Schur polynomials are exactly irreducible characters of $SL_n(\Bbb C)$, and so our result implies \cite[Theorem 1.2]{LNW}. We demonstrate our result in Section \ref{sec:G_2} for an exotic example of the degree 7 standard $L$-function of the exceptional group of type $G_2$. 

We organize our paper as follows. In Section 2, we review unramified Hecke algebras and Sato-Tate measure. In Section 3, we recall families of cusp forms where the simultaneous vertical Sato-Tate distribution holds. In Section 4, we prove the central limit theorem for the families in Section 3. In Section 5, we prove the central limit theorem for Langlands $L$-functions. In Section 6, we consider the degree 7 standard $L$-function of the exceptional group of type $G_2$. 

\section{Unramified Hecke algebra and Sato-Tate measure}

\subsection{Notation}

Let $G$ denote a connected algebraic group over the rational number field $\Q$.
Throughout this paper, we assume that $G$ is split over $\Q$ and (absolutely) simple. 
For a field $F$ of characteristic zero, let $G(F)$ denote the set of $F$-rational points of $G$. 
For each prime $p$, we write $\Q_p$ for the $p$-adic number field and set $G_p\coloneqq G(\Q_p)$. 
Let $K_p$ denote a hyperspecial compact subgroup of $G_p$. 
We normalize the Haar measure $dg_p$ on $G_p$ by setting $\vol(K_p)=1$ for any prime $p$.
Let $C^\infty_c(G_p)$ denote the space of compactly supported locally constant functions on $G_p$. 
The $L^1$-norm $\| \; \|_{1,p}$ on $C^\infty_c(G_p)$ is defined by $\|h\|_{1,p}\coloneqq \int_{G_p} |h(g_p)|\, dg_p$, $h\in C_c^\infty(G_p)$.  

The rank of $G$ is denote by $\mathbf{r}$. 
Take a maximal $\Q$-split torus $T$ of $G$, i.e., $T(\Q)\cong (\Q^\times)^\mathbf{r}$. 
We also take a Borel subgroup $B=TU$ over $\Q$ where $U$ is the unipotent radical of $B$. 
Let $R$ denote the set of roots of $T$ in $G$, and $R^+$ denote the set of positive roots in $R$ corresponding to $B$. 
Write $W$ for the Weyl group of $T$ in $G$. 
We identify $W$ with the Weyl group of $R$. 
Let $X_*(T)$ denote the abelian group of co-characters of $T$. 
Take a $\Z$-basis $\{e_1,e_2,\ldots,e_\mathbf{r}\}$ of $X_*(T)$. 
For each $\lambda\in X_*(T)$, there uniquely exist $a_1,a_2,\dots,a_\mathbf{r}\in\Z$ so that $\lambda=a_1e_1+a_2e_2+\cdots+a_\mathbf{r}e_\mathbf{r}$, and then we define a height function $\|\; \|$ on $X_*(T)$ by  
\[
| \lambda|\coloneqq \max_{1\le j\le \mathbf{r}} |a_j|, \qquad \|\lambda\|\coloneqq \max_{w\in W} |w\lambda|.
\]

\subsection{Unramified Hecke algebra}\label{sec:Hecke}

%, and $\mathbb{A}$ for the adele ring of $\Q$.
%(resp. $\mathbb{A}_f$) for the adele (resp. finite adele) ring of $\Q$.
%Choose an open compact subgroup $K_0$ of $G(\mathbb{A}_f)$.
%There exists a finite set $S_0$ of primes such that $G$ is unramified over $\Q_p$ for every prime $p\not\in S_0$.
% and one has $K_0=K_{S_0} \prod_{p\not\in S_0}K_p$, where  $K_{S_0}$ is an open compact subgroup of $\prod_{p\in S_0} G(\Q_p)$ and 
%Take a prime $p\not\in S_0$, and set $G_p\coloneqq G(\Q_p)$. 

In this section, we review the Sato-Tate measure following the formulation in \cite{ST}.
The group $T_p\coloneqq T(\Q_p)$ is a maximal split $\Q_p$-torus of $G_p$, and we have the Cartan decomposition $G_p=K_p T_p K_p$ and the Iwasawa decomposition $G_p=B_pK_p$ where $B_p\coloneqq B(\Q_p)$.

%We write $|\cdot |_p$ for the valuation of $\Q_p$. A height function $\|\cdot  \|_p$ on $G_p$ is defined by $\|g\|_p\coloneqq\max_{i,j}\{|g_{i,j}|_p,|g_{i,j}'|_p\}$ for $g=(g_{i,j})_{1\leq i,j\leq N}\in G_p\subset SL(N,\Q_p)$, where we set $g^{-1}=(g_{i,j}')_{1\leq i,j\leq N}$.  Note that $\| k_1gk_2 \|_p=\| g \|_p$ holds for any $k_1,k_2\in K_p$ and $g\in G_p$. 
%, and we define a hight function $\|\cdot \|_p$ on $X_*(T_p)$ by
%\[\| \omega \|_p := \left| \frac{\log\| \omega(p) \|_p}{\log p}\right| \in \mathbb{N}, \qquad \omega\in X_*(T_p).\]

The unramified Hecke algebra $\mathcal{H}^\mathrm{ur}(G_p)\coloneqq C^\infty_c(K_p\backslash G_p/K_p)$ is generated by the characteristic functions $\tau_{\omega,p}$ of the double cosets $K_p\omega(p)K_p$ with ${\omega\in X_*(T)}$. 
For each $\kappa\in\mathbb{N}$, a truncated unramified Hecke algebra $\mathcal{H}^\mathrm{ur}_\kappa(G_p)$ is defined by
\[
\mathcal{H}^\mathrm{ur}_\kappa(G_p)\coloneqq \langle \tau_{\omega,p} \mid  \; \omega\in X_*(T) ,\;   \|\omega\|\leq\kappa \rangle.
\]
\begin{lem}\label{lem:1}
There exists constants $a$, $b\in\N$ such that, for any prime $p$ and any $h\in \mathcal{H}^\mathrm{ur}_\kappa(G_p)$, we have
\[
 \|h\|_{1,p} \ll p^{a\kappa+b} \, \sup_{x\in G_p}|h(x)|. 
\]
\end{lem}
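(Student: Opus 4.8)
The plan is to expand $h$ in the basis of characteristic functions $\tau_{\omega,p}$, use that distinct Cartan double cosets $K_p\omega(p)K_p$ are pairwise disjoint, and thereby reduce the inequality to two uniform-in-$p$ estimates: a count of the cocharacters that can occur, and an upper bound for the volume of a single double coset.

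First I would write $h=\sum_{\omega}c_\omega\,\tau_{\omega,p}$, where $\omega$ runs over a set of representatives of the $W$-orbits in $X_*(T)$ with $\|\omega\|\le\kappa$, taken to be $B$-dominant (legitimate, since $K_p\omega(p)K_p$, hence $\tau_{\omega,p}$, depends only on the $W$-orbit of $\omega$). Because the supports $K_p\omega(p)K_p$ are disjoint, $\sup_{x\in G_p}|h(x)|=\max_\omega|c_\omega|$, and, using $\vol(K_p)=1$,
\[
\|h\|_{1,p}=\sum_{\omega}|c_\omega|\,\vol\!\big(K_p\omega(p)K_p\big)\le\Big(\max_\omega|c_\omega|\Big)\cdot N(\kappa)\cdot\max_{\substack{\omega\ \mathrm{dominant}\\ \|\omega\|\le\kappa}}\vol\!\big(K_p\omega(p)K_p\big),
\]
where $N(\kappa)=\#\{\omega\in X_*(T):\|\omega\|\le\kappa\}$. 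So it remains to bound $N(\kappa)$ and $\vol(K_p\omega(p)K_p)$ (for dominant $\omega$ with $\|\omega\|\le\kappa$) by $p^{O(\kappa)}$, with implied constants depending only on $G$.

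For $N(\kappa)$: writing $\omega=a_1e_1+\cdots+a_{\mathbf r}e_{\mathbf r}$ one has $|a_j|\le|\omega|\le\|\omega\|\le\kappa$, hence $N(\kappa)\le(2\kappa+1)^{\mathbf r}$, which for $p\ge2$ and $\kappa\ge1$ is $\le p^{c_1\kappa}$ with $c_1$ depending only on $\mathbf r$; the case $\kappa=0$ is trivial. For the volume: since $\vol(K_p)=1$, $\vol(K_p\omega(p)K_p)$ equals the number of left $K_p$-cosets contained in $K_p\omega(p)K_p$, which by the classical description of the Cartan decomposition (Macdonald's formula) is, for $\omega$ dominant, a polynomial in $p$ of degree $\langle\omega,2\rho\rangle$ (with $2\rho=\sum_{\alpha\in R^+}\alpha$) whose shape is governed by the root datum alone; in particular it is $\ll p^{\langle\omega,2\rho\rangle}$ uniformly in $p$. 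If one prefers to avoid quoting this, the cruder bound $p^{O(\|\omega\|)}$ — all that is needed here — also follows elementarily: fixing a Chevalley embedding $G\hookrightarrow GL_N$ over $\Z$ (so $K_p=G_p\cap GL_N(\Z_p)$, and $\omega(p)^{\pm1}$ are diagonal with entries of $p$-adic valuation $\ll\|\omega\|$), the $K_p$-cosets in $K_p\omega(p)K_p$ inject into the $GL_N(\Z_p)$-cosets of $\Z_p$-lattices sandwiched between $p^{M}\Z_p^N$ and $p^{-M}\Z_p^N$ for some $M\ll\|\omega\|$, and the latter number at most $p^{O(MN^2)}$. Either way, using that $\langle\omega,2\rho\rangle$ (respectively $M$) is $\le c_2\|\omega\|\le c_2\kappa$ with $c_2$ depending only on $G$ and the basis $\{e_j\}$, one gets $\vol(K_p\omega(p)K_p)\ll p^{c_2\kappa}$.

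Combining the two bounds yields $\|h\|_{1,p}\ll p^{(c_1+c_2)\kappa}\sup_{x\in G_p}|h(x)|$, which is the assertion with $a=c_1+c_2$ (and any admissible $b$, the $G$-dependent constant being absorbed by $\ll$). The only real content is the uniform-in-$p$ estimate for the volume of one Cartan double coset; once that is in hand, the rest is bookkeeping with the height function $\|\cdot\|$ and the orthogonality of supports. I do not anticipate a genuine obstacle — this is standard $p$-adic group theory — but care is needed so that the implied constants, especially in the volume bound, stay independent of $p$.
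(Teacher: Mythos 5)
Your proposal is correct and follows the same overall strategy as the paper: reduce to a uniform-in-$p$ bound on $\|\tau_{\omega,p}\|_{1,p}=\#(K_p\omega(p)K_p/K_p)$ for $\|\omega\|\le\kappa$, the reduction being justified by the disjointness of the Cartan double cosets (which the paper leaves implicit but you correctly spell out, together with the count $N(\kappa)\le(2\kappa+1)^{\mathbf r}\le p^{O(\kappa)}$). The only divergence is in how that single-coset volume is bounded: the paper uses the Iwasawa decomposition inside a fixed embedding $G\hookrightarrow SL_N$ and counts the finitely many $tuK_p$ that can meet $K_p\omega(p)K_p$, whereas you count sandwiched $\Z_p$-lattices (or invoke Macdonald's formula $\vol\approx p^{\langle\omega,2\rho\rangle}$); both are elementary and give the required $p^{O(\|\omega\|)}$ estimate with constants independent of $p$.
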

\begin{proof}
We write $|\cdot |_p$ for the valuation of $\Q_p$. 
Without loss of generality, we may assume that $G$ is a closed subgroup of $SL_N$ over $\Q$, $T$ consists of diagonal matrices in $SL_N$, and $U$ consists of strict upper triangular matrices in $SL_N$.  

There exist positive integers $r$ and $s$ such that, for any prime $p$, any $\omega\in X_*(T)$, and any $g=(g_{ij})_{1\leq i,j\leq N}\in G_p\subset SL_N(\Q_p)$, we have $|g_{ij}|_p \le p^{r\| \omega\|+s}$ $(\forall i,j)$ if $g$ is in $K_p \omega(p) K_p$. 
If we suppose $tu\in K_p\omega(p) K_p$, $t=\mathrm{diag}(t_1,t_2,\dots,t_N)\in T_p$, $u\in U_p\coloneqq U(\Q_p)$, then by $t\in SL_N(\Q_p)$ we obtain
\[
p^{-N(r\| \omega\|+s)}\le |t_j|_p \le p^{r\| \omega\|+s} \quad (1\le j\le N).
\]
Hence, by the Iwasawa decomposition, we obtain
\[
K_p \omega(p) K_p  \subset \bigcup_{t\in T_p, u\in U_p } tu K_p 
\]
where $t$ runs over all elements $\mathrm{diag}(p^{m_1},p^{m_2},\dots,p^{m_N})\in T_p$, $-r\| \omega\|-s\le m_j\le N(r\| \omega\|+s)$ and $u$ runs over all elements $(u_{ij})\in U_p$ such that $u_{ij}\in p^{-r\| \omega\|-s}\Z_p/p^{N(r\| \omega\|+s)}\Z_p$ for $i<j$.  
Therefore, since $\|\tau_{\omega,p}\|_{1,p} = \#( K_p \omega(p) K_p /K_p )$, we have $\|\tau_{\omega,p}\|_{1,p} \ll p^{r'\|\omega\|+v'}$ for some $r'$, $s'\in\N$.  
Thus, the proof is completed. 
\end{proof}

\subsection{Plancherel measure}\label{sec:pl}

Write $\widehat{G}$ for the dual group of $G$. 
Let $\widehat{T}$ denote the dual torus of $T$ and let $\widehat{T}_c$ denote the compact subtorus of  $\widehat{T}$. 
The Weyl group $W$ naturally acts on $\widehat{T}$ and $\widehat{T}_c$.  
Take a prime $p$. 
We write $G_p^{\wedge,\mathrm{ur}}$ (resp. $G_p^{\wedge,\mathrm{ur,temp}}$) for the unramified (resp. unramified and tempered) part of the unitary dual of $G_p$. 
By the canonical map given in \cite[pp. 33--34]{ST}, we have the topological injective mapping $G_p^{\wedge,\mathrm{ur}} \to \widehat{T}/W$ and the topological isomorphism
\[
G_p^{\wedge,\mathrm{ur,temp}} \cong \widehat{T}_c/W .
\]
For $h_p\in \mathcal{H}^\mathrm{ur}(G_p)$, a continuous function $\widehat{h_p}$ on $\widehat{T}/W$ is defined by
\[
\widehat{h_p}(\pi_p)\coloneqq \mathrm{tr} \, \pi_p(h_p), \qquad \pi_p \in G_p^{\wedge,\mathrm{ur}},
\]
and it is well-known that the Plancherel measure $\mu_p^\mathrm{pl,ur}$ on $G_p^{\wedge,\mathrm{ur}}$ satisfies
\[
\mu_p^\mathrm{pl,ur}(\widehat{h_p})=h_p(1) , \qquad  h_p\in \mathcal{H}^\mathrm{ur}(G_p). 
\]
Notice that the support of $\mu_p^\mathrm{pl,ur}$ is included in $G_p^{\wedge,\mathrm{ur,temp}} \cong \widehat{T}_c/W$.
Let  $\mu_p$ denote the measure of $\mu_p^\mathrm{pl,ur}$ pulled back to $\widehat{T}_c/W$. 
In \cite[Proposition 3.3]{ST}, $\mu_p$ is explicitly described as
\begin{equation}\label{eq:pl}
\mu_p(t)=C_p\cdot \frac{\det\left((1-\mathrm{ad}(t))|_{\mathrm{Lie}(\widehat{G})/\mathrm{Lie}(\widehat{T})}\right)}{\det\left((1-p^{-1}\mathrm{ad}(t))|_{\mathrm{Lie}(\widehat{G})/\mathrm{Lie}(\widehat{T})}\right)} \, dt
\end{equation}
for some constant $C_p\in\C^\times$, which depends only on the normalization of a Haar measure $dt$ on $\widehat{T}_c/W$. 
If $h_p$ is the characteristic function of $K_p$, then $\widehat{h_p}\equiv 1$ on $\widehat{T}/W$.  
Hence, we obtain 
\begin{equation}\label{eq:constant}
\int_{\widehat{T}_c/W} \mu_p(t)=1.
\end{equation}

Let $S$ be a finite set of prime numbers.
Suppose that $S$ has no intersection with $S_0$, and  set
\[
G_S \coloneqq\prod_{p\in S}G_p,\quad \mathcal{H}^\mathrm{ur}(G_S)\coloneqq\bigotimes_{p\in S}\mathcal{H}^\mathrm{ur}(G_p), \quad \text{and} \quad \mathcal{H}^\mathrm{ur}_\kappa(G_S)\coloneqq\bigotimes_{p\in S}\mathcal{H}^\mathrm{ur}_\kappa(G_p).
\]
Write $G_S^{\wedge,\mathrm{ur}}$ (resp. $G_S^{\wedge,\mathrm{ur,temp}}$) for the unramified (resp. unramified and tempered) part of the unitary dual of $G_S$. 
Clearly, one has an isomorphism 
\[
G_S^{\wedge,\mathrm{ur,temp}} \cong \prod_{p\in S} \widehat{T}_c/W .
\]
Further, for each $h_S\in \mathcal{H}^\mathrm{ur}(G_{S})$ we define the continuous function $\widehat{h_S}$ on $G_S^{\wedge,\mathrm{ur}}$ by $\widehat{h_S}(\pi_S)\coloneqq \mathrm{Tr}\, \pi_S(h_S)$ $(\pi_S\in G_S^{\wedge,\mathrm{ur}})$.
Since the Plancherel measure $\mu_S^\mathrm{pl,ur}$ on $G_S^{\wedge,\mathrm{ur}}$ satisfies $\mu_S^\mathrm{pl,ur}=\prod_{p\in S} \mu_p^\mathrm{pl,ur}$, one has  $\mu_S^\mathrm{pl,ur}(\widehat{h_S})=h_S(1)$ and the support of $\mu_S^\mathrm{pl,ur}$ is contained in $G_S^{\wedge,\mathrm{ur,temp}}$. 
Let  $\mu_S$ denote the measure of $\mu_S^\mathrm{pl,ur}$ pulled back to $\prod_{p\in S} \widehat{T}_c/W $. 

\subsection{Sato-Tate measure}\label{sec:generalST}

We briefly recall a characterization of the Sato-Tate measure for $G$, and refer the reader to \cite[Section 5]{ST} for details. 
We denote by $\mu^\mathrm{ST}_{\infty}$  the Sato-Tate measure  on $\widehat{T}_c/ W$ introduced in \cite[Definition 5.1]{ST}. 
It can be characterized by a limit of Plancherel measures. 
By \cite[Propositions 3.3 and 5.3]{ST} one then has the weak convergence
\begin{equation}\label{eq:plST}
\mu_p\to \mu^\mathrm{ST}_{\infty} \quad \text{as $p\to \infty$.}
\end{equation}
By \eqref{eq:pl}, \eqref{eq:constant}, and \eqref{eq:plST}, we obtain
\begin{equation}\label{eq:ST}
\mu^\mathrm{ST}_\infty(t)=C\cdot \det\left((1-\mathrm{ad}(t))|_{\mathrm{Lie}(\widehat{G})/\mathrm{Lie}(\widehat{T})}\right) \, dt,
\end{equation}
for some constant $C$ so that $\int_{\widehat{T}_c/ W}  \mu^\mathrm{ST}_{\infty}(t)=1$. We also have 
\begin{equation}\label{eq:ptoST}
\mu_p=(1+O(p^{-1}))\mu^\mathrm{ST}_\infty \quad \text{as $p\to \infty$.}
\end{equation}

\subsection{Irreducible characters of compact Lie groups}

Let $\widehat{K}$ denote a maximal compact subgroup of $\widehat{G}$ such that $\widehat{T}_c$ is a maximal torus  in $\widehat{K}$. 
Then, the Weyl group of $\widehat{T}_c$ in $\widehat{K}$ is identified with $W$. 
The root system for $\widehat{K}$ and $\widehat{T}_c$  is irreducible under our setup. 
Let $P$ be the weight lattice for $\widehat{K}$ and $\widehat{T}_c$, and $P^+$ be the set of the dominant weights. 
We regard $e^\lambda$, $\lambda\in P$, as a character of $\widehat{T}_c$ by setting $e^\lambda(x)=e^{\langle \lambda, \log(x)\rangle}$ for $x\in \widehat{T}_c$. 
The characters $e^\lambda$, $\lambda\in P$ generate the group algebra $A=\Bbb RP$. 
The Weyl group $W$ acts on $P$, and also on $A$: $w(e^\lambda)=e^{w\lambda}$. 
Let $\rho=\frac 12 \sum_{\alpha\in R^+} \alpha$, and define 
$$\delta=\prod_{\alpha\in R^+} (e^{\alpha/2}-e^{-\alpha/2}).
$$
Note that $\delta$ is skew-symmetric for $W$, i.e., $w\delta=\epsilon(w)\delta$ for all $w\in W$, where $\epsilon(w)=(-1)^{l(w)}$ and $l(w)$ is the length of the Weyl group element. For each $\lambda\in P^+$, the sum $\sum_{w\in W} \epsilon(w) e^{w(\lambda+\rho)}$ is skew-symmetric, and is divisible by $\delta$. Let
$$\chi_\lambda=\delta^{-1} \sum_{w\in W} \epsilon(w) e^{w(\lambda+\rho)}.
$$
It is called the Weyl character corresponding to $\lambda$. By Weyl's denominator identity, 
$$\delta=\sum_{w\in W} \epsilon(w) e^{w(\rho)}.
$$
Let $\pi_\lambda$ denote the irreducible representation  of $\widehat{K}$ corresponding to the highest weight $\lambda$. 
Notice that $\chi_\lambda$ agrees with the character of $\pi_\lambda$ by the Weyl character formula, and $\chi_\lambda$ is identified with the $W$-invariant polynomial on $\widehat{T}_c$. 
Let $A^W$ denote the subspace of $A$ consisting of $W$-invariant elements in $A$.  
An inner product $\langle \; ,\; \rangle$ on $A^W$ is define by
\[
\langle h_1 ,h_2 \rangle \coloneqq \int_{\widehat{T}_c/W} h_1(t)\, \overline{h_2(t)} \, \mu^\mathrm{ST}_\infty(t) , \quad h_1,h_2\in A^W. 
\]
\begin{lem}\label{lem:2}
The irreducible characters $\chi_\lambda$, $\lambda\in P^+$ form an orthonormal basis on $A^W$. 
In particular, for any $\lambda$, $\lambda'\in P^+$, we have $\langle \chi_\lambda,\chi_{\lambda'}\rangle=\delta_{\lambda,\lambda'}$. 
\end{lem}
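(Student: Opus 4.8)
The plan is to recognize the inner product $\langle\,\cdot\,,\cdot\,\rangle$ on $A^W$ as the standard $L^2$-pairing of class functions on the compact group $\widehat{K}$; granting this, the lemma follows from the Peter--Weyl theorem together with classical highest weight theory.

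\textbf{Step 1: identify the measure.} I would first show that $\mu^\mathrm{ST}_\infty$ is the pushforward to $\widehat{T}_c/W$ of the Haar measure of total mass one on $\widehat{K}$. Decompose $\mathrm{Lie}(\widehat{G})/\mathrm{Lie}(\widehat{T})$ into the root spaces for the roots $\alpha$ of $\widehat{T}_c$ in $\widehat{K}$; since $\mathrm{ad}(t)$ acts on the $\alpha$-root space by $e^\alpha(t)$, the density appearing in \eqref{eq:ST} equals
\[
\det\!\left((1-\mathrm{ad}(t))|_{\mathrm{Lie}(\widehat{G})/\mathrm{Lie}(\widehat{T})}\right)=\prod_{\alpha\in R^+}(1-e^\alpha(t))(1-e^{-\alpha}(t)).
\]
On the compact torus one has $\overline{e^{\alpha/2}(t)}=e^{-\alpha/2}(t)$, so $(1-e^\alpha(t))(1-e^{-\alpha}(t))=-(e^{\alpha/2}(t)-e^{-\alpha/2}(t))^2=|e^{\alpha/2}(t)-e^{-\alpha/2}(t)|^2$, and therefore the density is exactly $|\delta(t)|^2$. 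The Weyl integration formula reads $\int_{\widehat{K}}f(g)\,dg=\tfrac{1}{|W|}\int_{\widehat{T}_c}f(t)\,|\delta(t)|^2\,dt$ for every class function $f$ on $\widehat{K}$; comparing with \eqref{eq:ST} and noting that both $\mu^\mathrm{ST}_\infty$ and the pushforward of Haar measure are probability measures (so the constant $C$ is forced), we conclude that for $h_1,h_2\in A^W$, viewed as class functions on $\widehat{K}$,
\[
\langle h_1,h_2\rangle=\int_{\widehat{K}}h_1(g)\,\overline{h_2(g)}\,dg.
\]

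\textbf{Step 2: orthonormality.} By the Weyl character formula $\chi_\lambda$ is the character of the irreducible representation $\pi_\lambda$ of $\widehat{K}$, and $\pi_\lambda\cong\pi_{\lambda'}$ if and only if $\lambda=\lambda'$. Hence Schur orthogonality for the compact connected Lie group $\widehat{K}$ gives $\langle\chi_\lambda,\chi_{\lambda'}\rangle=\int_{\widehat{K}}\chi_\lambda\,\overline{\chi_{\lambda'}}\,dg=\delta_{\lambda,\lambda'}$.

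\textbf{Step 3: spanning.} Finally I would check that $\{\chi_\lambda:\lambda\in P^+\}$ spans $A^W$. The $W$-orbit sums $m_\mu=\sum_{\nu\in W\mu}e^\nu$, $\mu\in P^+$, clearly form an $\R$-basis of $A^W$. Expanding $\chi_\lambda$ in this basis, the highest weight theory of $\pi_\lambda$ (equivalently, the Weyl character formula) yields $\chi_\lambda=m_\lambda+\sum_\mu c_{\lambda\mu}\,m_\mu$ with $\mu$ ranging over dominant weights strictly below $\lambda$ in the dominance order, so the transition matrix between $\{\chi_\lambda\}$ and $\{m_\mu\}$ is unitriangular, hence invertible; thus $\{\chi_\lambda\}_{\lambda\in P^+}$ is a basis of $A^W$. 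The only step with real content is Step 1 — matching the explicit Sato--Tate density of \eqref{eq:ST} with the Weyl integration density $|\delta|^2$ and verifying that the normalizations agree; once the inner product is so identified, everything else is standard.
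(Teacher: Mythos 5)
Your proof is correct and follows essentially the same route as the paper: identify the Sato--Tate density with the Weyl integration density $|\delta|^2$ so that $\langle\cdot,\cdot\rangle$ becomes the $L^2$-pairing of class functions on $\widehat{K}$, then invoke Schur/Peter--Weyl orthogonality. The only small variation is that you establish spanning by the unitriangular transition matrix between $\{\chi_\lambda\}$ and $\{m_\mu\}$, which is cleaner for the \emph{algebraic} space $A^W$ than the paper's appeal to completeness of characters in the $L^2$-space of class functions.
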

\begin{proof}
A function $f$ on $\widehat{K}$ is called a class function if $f(y^{-1}xy)=f(x)$ for all $x,y\in \widehat{K}$. 
By the Peter-Weyl theorem and \cite[Theorem 12.18]{B}, we can prove that the irreducible characters form an orthonormal basis in the $L^2$-space of class functions on $\widehat{K}$.  
Using the Weyl integral formula (see \cite[Corollary 11.32]{B}), we can translate this fact that $\chi_\lambda$, $\lambda\in P^+$ form an orthonormal basis of $A^W$. 
%Their orthogonality and normalization follow from Schur's lemma.
\end{proof}

\begin{remark}
The representation $\pi_\lambda$ of $\widehat{K}$ is self-dual if and only if $\chi_\lambda=\overline{\chi_\lambda}$. 
The adjoint representation is always self-dual. 
It is known that every irreducible representations are self-dual for simply connected compact Lie groups if and only if $-1$ is contained in the Weyl group. It is the case except for $A_n$ ($n\ge 2$), $D_n$ ($n$ is odd), and $E_6$. %In this paper we restrict ourselves to self-dual representations.
\end{remark}

\begin{cor}\label{chi-lambda-1} 
Suppose $\chi_\lambda\ne 1$.
If $\chi_\lambda=\overline{\chi_\lambda}$, then
\[
\int_{\widehat{T}_c/W} \chi_\lambda(t) \, \mu^\mathrm{ST}_\infty(t)=0,\quad \int_{\widehat{T}_c/W} \chi_\lambda(t)^2 \, \mu^\mathrm{ST}_\infty(t)=1.
\]
\end{cor}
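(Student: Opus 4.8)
The plan is to read this off directly from the orthonormality established in Lemma \ref{lem:2}, once one identifies the constant function $1$ as a member of that orthonormal basis. First I would note that $1 = \chi_0$, the character of the trivial one-dimensional representation $\pi_0$ of $\widehat K$ attached to the highest weight $0 \in P^+$: from the definition, $\chi_0 = \delta^{-1}\sum_{w\in W}\epsilon(w)e^{w\rho} = \delta^{-1}\delta = 1$ by Weyl's denominator identity. Thus $1$ is one of the vectors $\chi_\lambda$, $\lambda\in P^+$, and the hypothesis $\chi_\lambda \ne 1$ means precisely $\lambda \ne 0$.

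For the first identity, since $\overline{1}=1$ we have
\[
\int_{\widehat{T}_c/W} \chi_\lambda(t)\,\mu^\mathrm{ST}_\infty(t)
= \int_{\widehat{T}_c/W} \chi_\lambda(t)\,\overline{\chi_0(t)}\,\mu^\mathrm{ST}_\infty(t)
= \langle \chi_\lambda,\chi_0\rangle = \delta_{\lambda,0} = 0
\]
by Lemma \ref{lem:2}, because $\lambda \ne 0$. For the second identity I would invoke the self-duality hypothesis $\chi_\lambda = \overline{\chi_\lambda}$ to rewrite $\chi_\lambda^2 = \chi_\lambda\,\overline{\chi_\lambda}$, whence
\[
\int_{\widehat{T}_c/W} \chi_\lambda(t)^2\,\mu^\mathrm{ST}_\infty(t)
= \int_{\widehat{T}_c/W} \chi_\lambda(t)\,\overline{\chi_\lambda(t)}\,\mu^\mathrm{ST}_\infty(t)
= \langle \chi_\lambda,\chi_\lambda\rangle = 1,
\]
again by Lemma \ref{lem:2}.

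There is essentially no obstacle: the corollary is a formal consequence of orthonormality. The only point deserving a word of care is the identification $1 = \chi_0$ together with the injectivity of $\lambda\mapsto\chi_\lambda$ on $P^+$ (so that $\chi_\lambda\ne 1$ indeed forces $\lambda\ne 0$); the latter is itself immediate from Lemma \ref{lem:2}, since $\chi_\lambda=\chi_{\lambda'}$ would give $1=\langle\chi_\lambda,\chi_{\lambda'}\rangle=\delta_{\lambda,\lambda'}$. I would also remark, as a sanity check, that the normalization $\int_{\widehat{T}_c/W}\mu^\mathrm{ST}_\infty = 1$ from \eqref{eq:ST} is consistent with $\langle\chi_0,\chi_0\rangle = 1$, though it is not otherwise needed.
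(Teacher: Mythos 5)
Your proof is correct and follows the same route as the paper, which simply asserts that the corollary ``obviously follows from Lemma \ref{lem:2}''; you have just made the (intended) application of orthonormality explicit, including the identification $1=\chi_0$ and the use of self-duality to rewrite $\chi_\lambda^2$ as $\chi_\lambda\overline{\chi_\lambda}$.
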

\begin{proof}
This obviously follows from Lemma \ref{lem:2}. 
\end{proof}

\begin{cor}\label{chi-lambda} 
Suppose $\pi_\lambda$ is not self-dual. Then
\[
\int_{\widehat{T}_c/W} \mathrm{Re}(\chi_\lambda(t)) \, \mu^\mathrm{ST}_\infty(t)=\int_{\widehat{T}_c/W} \mathrm{Im}(\chi_\lambda(t)) \, \mu^\mathrm{ST}_\infty(t)=0,
\]

\[\int_{\widehat{T}_c/W}\chi_\lambda(t)^2 \, \mu^\mathrm{ST}_\infty(t)=0.
\]

\[
\int_{\widehat{T}_c/W} \mathrm{Re}(\chi_\lambda(t))^2\, \mu^\mathrm{ST}_\infty(t)=\int_{\widehat{T}_c/W} \mathrm{Im}(\chi_\lambda(t))^2 \, \mu^\mathrm{ST}_\infty(t)=\frac 12.
\]
\end{cor}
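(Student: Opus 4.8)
The plan is to deduce every identity from Lemma~\ref{lem:2} by decomposing $\mathrm{Re}(\chi_\lambda)$, $\mathrm{Im}(\chi_\lambda)$ and their squares in terms of the orthonormal basis $\{\chi_\mu\}_{\mu\in P^+}$ of $A^W$. The only additional input needed is the standard fact that the complex conjugate of an irreducible character is again an irreducible character, namely that of the contragredient representation: $\overline{\chi_\lambda}=\chi_{\lambda^\vee}$ for a unique $\lambda^\vee\in P^+$, and $(\lambda^\vee)^\vee=\lambda$. The hypothesis that $\pi_\lambda$ is not self-dual means precisely $\lambda^\vee\neq\lambda$, and it also forces $\lambda\neq 0$ and $\lambda^\vee\neq 0$, since the trivial representation is self-dual.

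For the first pair of identities I would write $\mathrm{Re}(\chi_\lambda)=\tfrac12(\chi_\lambda+\chi_{\lambda^\vee})$ and $\mathrm{Im}(\chi_\lambda)=\tfrac1{2i}(\chi_\lambda-\chi_{\lambda^\vee})$. Since $\chi_0=1$, Lemma~\ref{lem:2} gives $\int_{\widehat{T}_c/W}\chi_\mu\,\mu^\mathrm{ST}_\infty=\langle\chi_\mu,\chi_0\rangle=\delta_{\mu,0}$ for every $\mu\in P^+$, so both integrals vanish because $\lambda\neq 0$ and $\lambda^\vee\neq 0$. For the second identity I use $\chi_\lambda^2=\chi_\lambda\,\overline{\chi_{\lambda^\vee}}$, so that $\int_{\widehat{T}_c/W}\chi_\lambda^2\,\mu^\mathrm{ST}_\infty=\langle\chi_\lambda,\chi_{\lambda^\vee}\rangle=\delta_{\lambda,\lambda^\vee}=0$, invoking $\lambda\neq\lambda^\vee$.

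For the last pair I would expand $\mathrm{Re}(\chi_\lambda)^2=\tfrac14\bigl(\chi_\lambda^2+2\chi_\lambda\overline{\chi_\lambda}+\overline{\chi_\lambda}^2\bigr)$ and $\mathrm{Im}(\chi_\lambda)^2=-\tfrac14\bigl(\chi_\lambda^2-2\chi_\lambda\overline{\chi_\lambda}+\overline{\chi_\lambda}^2\bigr)$. Since $\mu^\mathrm{ST}_\infty$ is a real (indeed positive, total mass $1$) measure, $\int_{\widehat{T}_c/W}\overline{\chi_\lambda}^2\,\mu^\mathrm{ST}_\infty=\overline{\int_{\widehat{T}_c/W}\chi_\lambda^2\,\mu^\mathrm{ST}_\infty}=0$ by the previous step, while $\int_{\widehat{T}_c/W}\chi_\lambda\overline{\chi_\lambda}\,\mu^\mathrm{ST}_\infty=\langle\chi_\lambda,\chi_\lambda\rangle=1$ by Lemma~\ref{lem:2}; substituting gives $\tfrac14(0+2+0)=\tfrac12$ in both cases. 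There is no genuine obstacle here: the only point requiring care is recording the identification $\overline{\chi_\lambda}=\chi_{\lambda^\vee}$ and translating ``$\pi_\lambda$ not self-dual'' into $\lambda\neq\lambda^\vee$, after which everything follows from orthonormality together with bilinearity of the integral.
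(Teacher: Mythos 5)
Your proposal is correct and follows essentially the same route as the paper: identify $\overline{\chi_\lambda}$ with the irreducible character $\chi_{\lambda^\vee}$ of the contragredient (the paper writes $\chi_{\hat\lambda}$), translate non-self-duality into $\lambda^\vee\ne\lambda$, and then read everything off from the orthonormality in Lemma~\ref{lem:2}. You merely spell out more of the bookkeeping (in particular the first pair of linear identities, and the explicit $\tfrac14(0+2+0)$ computation) than the paper's terse sketch does.
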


\begin{proof} Let $\widehat{\pi_\lambda}=\pi_{\hat\lambda}$ be the dual representation of $\pi_\lambda$. Then $\chi_{\hat\lambda}=\overline{\chi_\lambda}\ne \chi_\lambda$.
Therefore, 
$$\int_{\widehat{T}_c/W}\chi_\lambda(t)^2 \, \mu^\mathrm{ST}_\infty(t)=\int_{\widehat{T}_c/W}\chi_\lambda(t)\overline{\chi_{\hat\lambda}} \, \mu^\mathrm{ST}_\infty(t)=0,
$$
by Lemma \ref{lem:2}. For the last identity, use $\mathrm{Re}(\chi_\lambda(t))=\frac{1}{2}(\chi_\lambda(t)+\overline{\chi_\lambda(t)})$ and $\mathrm{Im}(\chi_\lambda(t))=\frac{1}{2i}(\chi_\lambda(t)-\overline{\chi_\lambda(t)})$.
\end{proof}

\begin{remark} 
We use the well-known fact that irreducible representations of a complex Lie group are identified with representations of its maximal compact subgroup, and we use them interchangeably.
\end{remark}

\begin{remark} The irreducible character $\chi_\lambda$ is an example of Heckman-Opdam polynomials for the root system $R$ (cf. \cite{V}).
\end{remark}

Let $m_\lambda=\sum_{\mu\in W(\lambda)} e^\mu$, where $W(\lambda)$ is the orbit of $\lambda$ under the action of the Weyl group.
Then 
$$m_\lambda=\sum_{\mu\in P^+\atop \mu\preceq \lambda} a_{\lambda,\mu} \, \chi_\mu,
$$
for some $a_{\lambda,\mu}\in \Bbb R$. 

%Then we have \begin{equation}\label{orthogonal}\frac 1{|W|} \int_{T} (m_\lambda-a_{\lambda,0}) \mu_\infty^{\rm ST}=0, \quad \frac 1{|W|} \int_{T} (m_\lambda-a_{\lambda,0})^2 \mu_\infty^{\rm ST}=\sum_{\mu\in P^+\atop \mu\preceq \lambda,\, \mu\ne 0} a_{\lambda,\mu}^2.\end{equation}

% The following is not needed, but we record it as a reference (cf. \cite{M1}). For a prime $p$, the Plancherel measure $\mu_p$ is $$\mu_p=C_p\prod_{\alpha\in R} \frac {1-e^\alpha}{1-p^{-1} e^{\alpha}}\, dx,$$where $C_p$ is such that $\int \mu_p=1$. Then

% \begin{thm} The orthogonal polynomials for $\mu_p$ are given by
% $$p_\lambda=W_\lambda(p)^{-1} \sum_{w\in W} w\left(e^\lambda \prod_{\alpha\in R^+} \frac {1-p^{-1}e^{-\alpha}}{1-e^{-\alpha}}\right),
% $$
% where $W_\lambda(p)=\sum_{w\in W\atop w\lambda=\lambda} p^{-l(w)}$.
% \end{thm}

\begin{exa}[$PGSp_4$ case]

We have $\widehat{PGSp_4}=Sp_4(\Bbb C)$. It is of type $C_2$. Hence the Sato-Tate measure is, for $(e^{i\theta_1},e^{i\theta_2}, e^{-i\theta_2},e^{-i\theta_1})\in Sp_4(\Bbb C)$, a torus element, 
$$\mu_\infty^{\rm ST}=\frac 1{4\pi^2}\left| (1-e^{2i\theta_1})(1-e^{2i\theta_2})(1-e^{i(\theta_1-\theta_2)})(1-e^{i(\theta_1+\theta_2)})\right|^2 \, d\theta_1 d\theta_2.
$$

Let $\lambda=e_1$ be a fundamental weight. Then 
$$m_\lambda=e^{i\theta_1}+e^{-i\theta_1}+e^{i\theta_2}+e^{-i\theta_2}.
$$
It is exactly the irreducible character $\chi_\lambda$ associated to $\lambda=e_1$. Let $x=e^{i\theta_1}+e^{-i\theta_1}=2\cos\theta_1$ and $y=e^{i\theta_2}+e^{-i\theta_2}=2\cos\theta_2$. Then $\chi_\lambda=x+y$. 

Let $\lambda=e_1+e_2$. Then $m_\lambda=e^{i(\theta_1+\theta_2)}+e^{-i(\theta_1+\theta_2)}+e^{i(\theta_1-\theta_2)}+e^{-i(\theta_1-\theta_2)}=xy$.
The irreducible character associated to $\lambda=e_1+e_2$ is $\chi_\lambda=xy+1$.

Let $\lambda=2e_1$. Then $m_\lambda=e^{2i\theta_1}+e^{-2i\theta_1}+e^{2i\theta_2}+e^{-2i\theta_2}=x^2+y^2-4$.
The irreducible character associated to $\lambda=2e_1$ is $\chi_\lambda=x^2+y^2-2+xy$.
Let $V(n,m)$ be the highest weight representation associated to $ne_1+me_2$, $n\geq m$. Then $V(1,0)$ is the standard representation, and its weights are $\pm e_1, \pm e_2$, and $V(2,0)=Sym^2\, V(1,0)$, and its weights are $\pm 2e_1, \pm 2e_2, \pm (e_1+e_2), \pm (e_1-e_2), 0$ (with multiplicity 2).

\end{exa}

\begin{exa}[$Sp_{2n}$ case]

We have $\widehat{Sp_{2n}}=SO_{2n+1}(\Bbb C)$. It is of type $B_n$. Hence the Sato-Tate measure is, 
for $(e^{i\theta_1},...,e^{i\theta_n},1,e^{-i\theta_n},...,e^{-i\theta_1})\in SO_{2n+1}(\Bbb C)$, a torus element, 

$$
\mu^{\rm ST}_\infty=\frac 1{(2\pi)^n} \prod_{j=1}^n \left|1-e^{i\theta_j}\right|^2 \prod_{1\leq j<k\leq n} \left|1-e^{i(\theta_j-\theta_k)}\right|^2 \prod_{1\leq j<k\leq n} \left|1-e^{i(\theta_j+\theta_k)}\right|^2 \, d\theta_1\cdots d\theta_n.
$$

Let $\lambda=e_1$ be the fundamental weight. Then 
$$m_\lambda=e^{i\theta_1}+e^{-i\theta_1}+\cdots+e^{i\theta_n}+e^{-i\theta_n}.
$$
Hence the irreducible character associated to $\lambda=e_1$ is
$$\chi_\lambda=1+e^{i\theta_1}+e^{-i\theta_1}+\cdots+e^{i\theta_n}+e^{-i\theta_n}.
$$

Let $x_j=e^{i\theta_j}+e^{-i\theta_j}$. Then $\chi_\lambda=1+x_1+\cdots+x_n$.
\end{exa}

\begin{exa}[$PGL_n$ case]

We have $\widehat{PGL_n}=SL_n(\Bbb C)$. It is of type $A_{n-1}$. Hence the Sato-Tate measure is, for $(e^{i\theta_1},...,e^{i\theta_n})\in SL_n(\Bbb C)$, a torus element, 
$$\mu_\infty^{\rm ST}=\frac 1{n! (2\pi)^{n-1}} \prod_{1\leq j<k\leq n} \left|1-e^{i(\theta_j-\theta_k)}\right|^2 \, d\theta_1\cdots d\theta_{n-1}.
$$
Let $x_j=e^{i\theta_j}$ $(1\le j\le n)$ $(x_1x_2\cdots x_n=1)$. 
Then the irreducible character associated to $\lambda=(1,0,...,0)$, is $\chi_\lambda=x_1+\cdots+x_n$. 

\end{exa}

\subsection{Satake transform}

Fix a prime $p$. 
For each unramified representation $\pi_p \in G_p^{\wedge,\mathrm{ur}}$, we write $s(\pi_p)\in \widehat {T}/W$ for the Satake parameter associated to $\pi_p$. 
Let $\mathcal{S}_p$ denote the Satake transform from $\mathcal{H}^\mathrm{ur}(G_p)$ to $A^W\otimes\C$. (See e.g. \cite{Gr}.) 
In particular, for any $\omega\in X_*(T)$, the image $\mathcal{S}_p(\tau_{\omega,p})$ belongs to $A^W$. 
Then, for $h_p \in \mathcal{H}(G_p)^\mathrm{ur}$, we have
\[%\begin{equation}\label{eq:Satake}
\widehat{h_p}(\pi_p)=\mathcal{S}_p(h_p)(s(\pi_p)) .
\]%\end{equation}
Notice that the characters of $\widehat{T}_c$ is uniquely extended on $\widehat{T}$.  
By the definitions for $\mu_p^{\mathrm{pl},\mathrm{ur}}$ and $\mu_p$, we also find
\[
\mu_p^{\mathrm{pl},\mathrm{ur}}(\widehat{h_p})=\mu_p(\mathcal{S}_p(h_p)).
\]
From this point on, we identify $\widehat{h_p}$ with $\mathcal{S}_p(h_p)$.  
Then $\widehat{h_p}$ is regarded as a function on $\widehat{T}/W$ and we have 
\begin{equation}\label{eq:inden}
\mu_p^{\mathrm{pl},\mathrm{ur}}(\widehat{h_p})=\mu_p(\widehat{h_p})=\int_{\widehat{T}_c/W} \widehat{h_p}(t) \, \mu_p(t).
\end{equation}
In addition, we can write it as 
\begin{equation}\label{eq:h_p}
\widehat{h_p}=\sum_{\lambda\in P^+} c_{\lambda} \cdot \chi_{\lambda} ,\quad c_\lambda\in\C \quad \text{(finite sum)}
\end{equation}
so that $\widehat{h_p}(\pi_p)=\sum_{\lambda\in P^+} c_\lambda \cdot \chi_{\lambda}(s(\pi_p))$.
%\textcolor{red}{In page 11, line 8, it is $h_p=\sum_\lambda c_\lambda\cdot\chi_\lambda$. Is this a typo?}
Then
\[
\mu_p^{\mathrm{pl},\mathrm{ur}}(\widehat{h_p})=\mu_p(\widehat{h_p})=\sum_{\lambda\in P^+} c_\lambda \cdot \mu_p(\chi_{\lambda}).
\]
Note that there is only one function $h_p\in\mathcal{H}^\mathrm{ur}(G_p)$ satisfying \eqref{eq:h_p} for a given $\{ c_\lambda \}_{\lambda\in P^+}$,  since the Satake transform $\mathcal{S}_p$ is an isomorphism between $\mathcal{H}^\mathrm{ur}(G_p)$ and $A^W\otimes\C$. 

By an isomorphism $X_*(T)\simeq X^*(\widehat{T})$, any $\lambda\in P^+$ is regarded as an element in  $X_*(T)$. 
The functions $\tau_{\lambda,p}$, $\lambda\in P^+$ form a basis of $\mathcal{H}^\mathrm{ur}(G_p)$. 
\begin{lem}\label{lem:1222}
Take an element $\lambda\in P^+$ and suppose that $\tau_\mu\in \mathcal{H}^\mathrm{ur}_\kappa(G_{p})$ for any $\mu\preceq \lambda$.  
Then, there exist constants $a_1$, $b_1\in\mathbb{N}$ such that, for any prime $p$, we have 
\[
\sup_{x\in G_p} |\mathcal{S}_p^{-1}(\chi_\lambda)(x)|\ll  p^{a_1\kappa+b_1}.  
\]
%Note that the implicit constant depends on $\lambda$. 
\end{lem}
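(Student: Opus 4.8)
The plan is to bound the coefficients of $h \coloneqq \mathcal{S}_p^{-1}(\chi_\lambda)$ in the basis $\{\tau_{\mu,p}\}_{\mu\in P^+}$ of $\mathcal{H}^{\mathrm{ur}}(G_p)$. Write $h = \sum_{\mu\in P^+}c_\mu\,\tau_{\mu,p}$, a finite sum; by triangularity of the Satake transform only $\mu\preceq\lambda$ occur, so in fact $h\in\mathcal{H}^{\mathrm{ur}}_\kappa(G_p)$ under the hypothesis, although we will not need this. By the Cartan decomposition the double cosets $K_p\mu(p)K_p$ are pairwise disjoint and each $\tau_{\mu,p}$ takes only the values $0$ and $1$; hence $\sup_{x\in G_p}|h(x)| = \max_{\mu}|c_\mu|$ and $c_\mu = h(\mu(p))$. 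Thus it is enough to bound $|c_\mu|$ for each $\mu$, uniformly in $p$.

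To isolate $c_\mu$, test $h$ against $\tau_{\mu,p}$: since the $\tau_{\nu,p}$ have disjoint supports, $\int_{G_p}h(g)\tau_{\mu,p}(g)\,dg = c_\mu\,\vol(K_p\mu(p)K_p) = c_\mu\,\|\tau_{\mu,p}\|_{1,p}$, and (using unimodularity of $G_p$) the left-hand side equals $(h*\tau_{\mu,p}^{\vee})(1)$, where $\tau^{\vee}(g)\coloneqq\tau(g^{-1})$. Since $\mathcal{S}_p$ is an algebra homomorphism and $\widehat h = \chi_\lambda$, applying the Plancherel identity \eqref{eq:inden} to $f = h*\tau_{\mu,p}^{\vee}\in\mathcal{H}^{\mathrm{ur}}(G_p)$ yields
\[
c_\mu\,\|\tau_{\mu,p}\|_{1,p} \;=\; \int_{\widehat{T}_c/W}\chi_\lambda(t)\,\widehat{\tau_{\mu,p}^{\vee}}(t)\,\mu_p(t).
\]

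Now estimate the integrand pointwise. By \eqref{eq:constant} and positivity of the density in \eqref{eq:pl}, $\mu_p$ is a probability measure on $\widehat{T}_c/W$. For $t\in\widehat{T}_c$ one has $|\chi_\lambda(t)|\le\chi_\lambda(1)=\dim\pi_\lambda$, since $\chi_\lambda$ is the character of the unitary representation $\pi_\lambda$; and $|\widehat{\tau_{\mu,p}^{\vee}}(t)| = |\mathrm{tr}\,\pi_t(\tau_{\mu,p}^{\vee})|\le\|\tau_{\mu,p}^{\vee}\|_{1,p} = \|\tau_{\mu,p}\|_{1,p}$, where $\pi_t$ is the (tempered, hence unitary) unramified principal series attached to $t$, on whose line of $K_p$-fixed vectors $\pi_t(\tau_{\mu,p}^{\vee})$ acts by the scalar $\widehat{\tau_{\mu,p}^{\vee}}(t)$. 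Combining, $|c_\mu|\le\dim\pi_\lambda$, so $\sup_{x\in G_p}|h(x)|\le\dim\pi_\lambda$. Finally $\tau_{\lambda,p}\in\mathcal{H}^{\mathrm{ur}}_\kappa(G_p)$ forces $\|\lambda\|\le\kappa$, and Weyl's dimension formula gives $\dim\pi_\lambda\ll(1+\|\lambda\|)^{|R^+|}\le(1+\kappa)^{|R^+|}\le p^{|R^+|\kappa}$ for $p\ge2$, which proves the lemma (with $a_1=|R^+|$, $b_1=0$). There is no essential obstacle: the argument assembles facts already set up in Section 2 (Cartan decomposition, the Plancherel identity \eqref{eq:inden}, the $L^1$-norm $\|\cdot\|_{1,p}$). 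The one step deserving a word of justification is the passage from $\widehat{\tau_{\mu,p}^{\vee}}(t)$ to a trace on the unitary tempered principal series and hence to $\|\tau_{\mu,p}^{\vee}\|_{1,p}$; and one should note that the bound $|c_\mu|\le\dim\pi_\lambda$ is very wasteful — the $c_\mu$ in fact carry strictly negative powers of $p$ — but it is amply sufficient for the stated estimate.
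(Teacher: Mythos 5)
Your proof is correct and takes a genuinely different route from the paper's, and in fact gives a strictly stronger conclusion. The paper starts from Gross's formula $\mathcal{S}_p(\tau_{\lambda,p}) = p^{\langle \lambda,\rho\rangle}\chi_\lambda + \sum_{\mu\prec\lambda} a_\lambda(\mu)\chi_\mu$, bounds the coefficients $a_\lambda(\mu)$ via Cauchy--Schwarz against the orthonormal $\chi_\mu$'s to get $|a_\lambda(\mu)| \ll p^{a_2\kappa + b_2}$, and then runs a descending induction on $\lambda$ through the resulting recursion for $\mathcal{S}_p^{-1}(\chi_\lambda)$. You instead work on the Hecke-algebra side throughout: you expand $h = \mathcal{S}_p^{-1}(\chi_\lambda)$ in the basis $\{\tau_{\mu,p}\}$, note that disjointness of the Cartan double cosets reduces $\sup_x|h(x)|$ to $\max_\mu|c_\mu|$, and extract $c_\mu$ by convolving with $\tau_{\mu,p}^\vee$ and applying the Plancherel identity $f(1)=\mu_p^{\mathrm{pl,ur}}(\widehat f)$, after which the crude bounds $|\chi_\lambda|\le\dim\pi_\lambda$ and $|\widehat{\tau_{\mu,p}^\vee}(t)|\le\|\tau_{\mu,p}\|_{1,p}$ (together with positivity of $\mu_p$ on the tempered dual, which is visible from \eqref{eq:pl} since the numerator and denominator are $\prod_{\alpha>0}|1-\alpha(t)|^2$ and $\prod_{\alpha>0}|1-p^{-1}\alpha(t)|^2$) yield $|c_\mu|\le\dim\pi_\lambda$. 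This gives a bound that is actually \emph{uniform in $p$} (the $p^{a_1\kappa}$ at the end is just a crude envelope for $\dim\pi_\lambda$), avoids any induction, and does not use Gross's triangularity formula except for the side remark that $h\in\mathcal{H}^\mathrm{ur}_\kappa(G_p)$. What you give up is a closer look at the Satake transform itself; what you gain is a shorter and sharper argument, with the cost shifted onto the already-established Plancherel identity \eqref{eq:inden} and the positivity of $\mu_p$. Both routes are sound; yours is the more efficient for this statement.
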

\begin{proof}
By \cite[(3.9)]{Gr}, 
\[%\begin{equation}\label{eq:tau}
\mathcal{S}_p(\tau_{\lambda,p})=p^{\langle \lambda, \rho \rangle} \chi_\lambda + \sum_{\mu\in P^+,\; \mu\prec\lambda} a_\lambda(\mu) \, \chi_\mu, \quad a_\lambda(\mu)\in\C.
\]%\end{equation}
By using $\tau_\lambda\in \mathcal{H}^\mathrm{ur}_\kappa(G_{p_j})$, the inner product $\langle \; , \; \rangle$ on $A^W$, and the Cauchy-Schwarz inequality,  one can prove that there exist constants $a_2$, $b_2\in\mathbb{N}$ so that, for any $\mu\prec\lambda$, 
\[
|a_\lambda(\mu)|  = | \langle \mathcal{S}_p(\tau_{\lambda,p}) , \chi_\mu\rangle| \le \langle  \mathcal{S}_p(\tau_{\lambda,p}) , \mathcal{S}_p(\tau_{\lambda,p}) \rangle^{1/2}\le \sup_{t\in\widehat{T}_c} |\mathcal{S}_p(\tau_{\lambda,p})(t)| \ll p^{a_2\kappa+b_2}. 
\]
Note that $a_2$ and $b_2$ are independent of $p$. 
Hence, 
\[
\sup_{x\in G_p} |\mathcal{S}_p^{-1}(\chi_\lambda)(x)|\ll \sup_{x\in G_p}\tau_{\lambda,p}(x)+ p^{a_2\kappa+b_2}\sum_{\mu\in P^+,\; \mu\prec\lambda}  \sup_{x\in G_p} |\mathcal{S}_p^{-1}(\chi_\mu)(x)|.
\]
By an inductive argument for $\lambda$, we obtain the assertion. 
\end{proof}

%Recall the following relationship between the Hecke eigenvalues and irreducible characters from \cite{Gr}:\begin{prop}\cite{Gr} We have the Satake isomorphism $\mathcal H_p\otimes\Bbb Z[p^{\frac 12},p^{-\frac 12}]\simeq R[\widehat G]\otimes \Bbb Z[p^\frac 12,p^{-\frac 12}]$. Hence for each $h\in H_p$, we can write it as $h=\sum c_{\sigma} \chi_{\sigma}$ for irreducible representations $\sigma$ so that $a_f(h)=\sum c_\sigma \chi_{\sigma}(s(\pi_p))$. \end{prop} 

\section{Asymptotic behaviors of Hecke eigenvalues; Simultaneous vertical Sato-Tate distribution}

\subsection{Notations}
%The notations are defined as follows: 
\begin{itemize}
\item Let $\A$ denote the adele ring of $\Q$, and $\A_f$ the finite adele ring of $\Q$. 
\item Set $G_\infty\coloneqq G(\R)$, and take a maximal compact subgroup $K_\infty$ of $G_\infty$. 
Choose a Haar measure $dg_\infty$ on $G_\infty$. 
\item We put $\mathbf{d}\coloneqq \dim G_\infty/K_\infty$. Recall $\mathbf{r}=\mathrm{rank}(G)$. 
\item For a topological group $\mathfrak{G}$, we write $\mathfrak{G}^\wedge$ for the unitary dual of $\mathfrak{G}$.
\item Take a finite set $\mathcal{T}$ of primes  and an open compact subgroup $K_\mathcal{T}$ of $G(\Q_\mathcal{T})$. An open compact subgroup $K$ of $G(\A_f)$ is defined by
\[
K\coloneqq K_\mathcal{T}\times\prod_{p\notin \mathcal{T}}K_p.
\] 
\item Set $\mathcal{H}^\mathcal{T}=\otimes_{p\notin \mathcal{T}}' \mathcal{H}^\mathrm{ur}(G_p)$ and $\| \; \|_1^\mathcal{T} \coloneqq \prod_{p\notin \mathcal{T}} \| \; \|_{1,p}$. 
Let $S$ be a finite set of primes and suppose $S\cap \mathcal{T}=\emptyset$.
For each $h \in \mathcal{H}^\mathrm{ur}(G_S)$, we identify $h$ with $h \otimes (\otimes_{p\notin \mathcal{T}\sqcup S}\mathbf{1}_{K_p})$, where $\mathbf{1}_{K_p}$ is the characteristic function of $K_p$. 
Then, $\mathcal{H}^\mathrm{ur}(G_S)$ is a subalgebra in $\mathcal{H}^\mathcal{T}$. 
We also put
\[
p_S\coloneqq \prod_{p\in S} p.
\]

\item Let $Z$ denote the center of $G$ and we set
\[
v(K)\coloneqq |Z(\Q)\cap G_\infty K| \, \frac{\mathrm{vol}(G(\Q)\backslash  G(\A))}{\mathrm{vol}(K)}.
\]
\end{itemize}

\subsection{Space of cusp forms}
We write $L^2_\mathrm{cus}(G(\Q)\backslash G(\A))$ for the cuspidal spectrum of $L^2(G(\Q)\backslash G(\A))$, and we have its decomposition as
\[
L^2_\mathrm{cus}(G(\Q)\backslash G(\A)) \cong \bigoplus_{\pi\in\widehat{G(\A)}} m_\pi \cdot \pi
\]
where $m_\pi\in\Z_{\ge 0}$ is the multiplicity of $\pi$. 
Take an irreducible representation $\sigma$ of $\widehat{G_\infty}$ and a $K_\infty$-type $\tau$ in $\widehat{K_\infty}$.
For each cuspidal automorphic representation $\pi=\pi_\infty\otimes \otimes_p \pi_p$, we set $\pi_{\bold f}=\otimes_p' \pi_p$, 
$$\pi_{\bold f}^{K}=\{ w\in \pi_{\bold f} \mid \pi_{\bold f}(k)w=w \, \text{ $(\forall k\in K)$} \},
$$  
\[
\cL(\sigma,K)=\{ \phi_\infty\otimes\phi_{\bold f}\in m_\pi \cdot \pi_\infty\otimes \pi_{\bold f}^{K} \mid \text{$\pi_\infty\simeq  \sigma$ and $\phi_\infty$ is $K_\infty$-finite} \} \subset L^2_\mathrm{cus}(G(\Q)\backslash G(\A)).
\]
Then, a vector space $\cS(\sigma,\tau,K)$ is defined by
\[
\cS(\sigma,\tau,K)= (\cL(\sigma,K)\otimes V_{\tau}^\vee)^{K_\infty},
\] 
where $V_{\tau}^\vee$ is the representation space of the contragredient representation of $\tau$. 
Every element  in $\cS(\sigma,\tau,K)$ is a $V_{\tau}^\vee$-valued automorphic form on $G(\Q)\backslash G(\A)$. 
By Harish-Chandra's finiteness theorem, $\dim \cS(\sigma,\tau,K)$ is finite. 
%Let $\mathcal{H}(G_p)$ denote the Hecke algebra of compactly supported smooth functions on $G_p$. 
%We set $\mathcal{H}(G_S)\coloneqq \otimes_{p\in S} \mathcal{H}(G_p)$, which contains $\mathcal{H}^\mathrm{ur}(G_S)$.
For each $h\in \mathcal{H}^\mathcal{T}$, a Hecke operator $T_h$ on $\cS(\sigma,\tau,K)$ is defined as
\[
(T_h\, \phi) (x)=\int_{G_S} f(xg) \, h(g) \, dg, \qquad f\in \cS(\sigma,\tau,K),
\]
where $dg=\prod_{p\notin \mathcal{T}}dg_p$. 
If $f \in \cS(\sigma,\tau,K)$ is an eigen vector for any $T_h$, then $f$ is said to be a Hecke eigenform. 
By the definition, there exists a basis of $\cS(\sigma,\tau,K)$, which consists of Hecke eigenforms.  
Let $\mathcal{F}(\sigma,\tau,K)$ denote a basis of Hecke eigenforms in $\cS(\sigma,\tau,K)$. 
For each $f\in \mathcal{F}(\sigma,\tau,K)$ and $h\in\mathcal{H}^\mathcal{T}$, we write $a_f(h)$ for the Hecke eigenvalue of $f$ and $T_h$, that is,
\[
T_h \, f=a_f(h) \,f ,\quad a_f(h)\in\C.
\]

\subsection{A Family of Maass cusp forms}\label{sec:Maass}

Let $T_\infty^0$ denote the connected component of $1$ in $T_\infty\coloneqq T(\R)$ (hence, $T_\infty^0\cong (\R_{>0})^{\mathrm{rank}(G)}$), let $\mathfrak{t}$ denote the Lie algebra of $T_\infty^0$. 
Then, we have the Langlands decomposition $G_\infty=T_\infty^0 U_\infty K_\infty$, where $U_\infty\coloneqq U(\R)$. 
%The bijective mapping $H_T\coloneqq T_\infty^0 \to \mathfrak{t}$ is defined by $\log$. 
Let $\mathfrak{t}^*$ denote the dual vector space of $\mathfrak{t}$, and let $G_\infty^{\wedge,\mathrm{ur},\mathrm{temp}}$ denote the subset of tempered spherical elements in $G_\infty^\wedge$. 
The element $\beta\in i\mathfrak{t}^*$ is identified with the character $t\mapsto e^{\beta ( \log(t))}$ on $T_\infty$.  
Then, an isomorphism $G_\infty^{\wedge,\mathrm{ur},\mathrm{temp}}\cong i\mathfrak{t}^*/W$ is obtained from the induced representation $\sigma_\beta \coloneqq \mathrm{Ind}^{G_\infty}_{T_\infty^0U_\infty}(\beta\boxtimes 1)$. 

For $h_\infty\in C_c^\infty(K_\infty\backslash G_\infty/ K_\infty)$ and $\pi\in G_\infty^{\wedge,\mathrm{ur},\mathrm{temp}}$, we set $\widehat{h_\infty}(\pi)\coloneqq \mathrm{tr} \, \pi(h_\infty)$. 
Define the Plancherel measure $\mu_\infty$ on $G_\infty^{\wedge,\mathrm{ur},\mathrm{temp}}$ by
\[
\mu_\infty (\widehat{h_\infty})=h_\infty(1) ,\qquad h_\infty\in C_c^\infty(K_\infty\backslash G_\infty/ K_\infty).
\]
Take a bounded domain $D$ with a rectifiable boundary in $i\mathfrak{t}^*$.  
There exists a constant $c_D>0$ such that 
\[
\mu_\infty(tD)=c_D\, t^\mathbf{d}+O(t^{\mathbf{d}-1}), \quad t\longrightarrow\infty.
\]
A family $\mathcal{F}(t,K)$ of spherical Maass cusp forms in $L^2$ is defined by
\[
\mathcal{F}(tD,K)\coloneqq \bigcup_{\sigma\in tD} \mathcal{F}(\sigma,\mathbf{1}_\infty,K) \quad \text{(finite sum)} 
\]
where  $\mathbf{1}_\infty$ denote the trivial representation of $K_\infty$. 
Set $\mathfrak{ms}(h)\coloneqq \sup_{x\in G(\A^\mathcal{T}),h(x)\neq 0} \log \| x \|$ for $h\in\mathcal{H}^\mathcal{T}$, where $\A^\mathcal{T}=\otimes_{p\notin \mathcal{T}}' \Q_p$ and $\| \; \|$ is a height on $G(\A)$ as in \cite{Art}. 

\begin{thm}{\rm \cite[Theorem 6.14]{FL}}. \label{thm:FL}
Assume that $G$ is a Chevalley group. 
Then, there exists a constant $\delta>0$ such that, for any $h\in\mathcal{H}^\mathcal{T}$, 
\[
\sum_{f\in \mathcal{F}(tD,K)} a_f(h) =v(K)\, \mu_\infty(tD) \, h(1)+O_{D,K}\left( \|h\|_1\, (1+\mathfrak{ms}(h))^\mathbf{r}\, t^{\mathbf{d}-\delta} \right) ,\quad t\to\infty.
\]
\end{thm}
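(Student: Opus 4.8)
The plan is to obtain this from the Arthur--Selberg trace formula, following the strategy of Finis--Lapid for quantitative limit-multiplicity statements. One applies the trace formula to a global test function $\Phi=\Phi_\infty\otimes\mathbf 1_{K_{\mathcal T}}\otimes h$ (normalized against $\vol(K_{\mathcal T})$), where $\Phi_\infty\in C_c^\infty(K_\infty\backslash G_\infty/K_\infty)$ is chosen so that its spherical Plancherel transform $\widehat{\Phi_\infty}$ approximates the characteristic function of $tD\subset i\mathfrak t^*/W$. Since $\partial D$ is rectifiable, one sandwiches $\mathbf 1_{tD}$ between smooth compactly supported $\phi_t^{\pm}$ with $\phi_t^{-}\le\mathbf 1_{tD}\le\phi_t^{+}$ and $\mu_\infty(\mathrm{Supp}(\phi_t^{+}-\phi_t^{-}))=O(t^{\mathbf d-1})$; inverting the spherical transform via Paley--Wiener produces $\Phi_\infty^{\pm}$ whose archimedean support grows only like $\log t$. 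For such $\Phi$, the cuspidal part of the spectral side of the trace formula computes, up to the boundary layer $O(t^{\mathbf d-1})$ which is absorbed into the final error, the sum $\sum_{f\in\mathcal F(tD,K)}a_f(h)$.

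The asserted main term is the identity (central) contribution on the geometric side. It equals $\vol(G(\Q)\backslash G(\A))\cdot\Phi(1)$, the central points in $Z(\Q)\cap G_\infty K$ contributing the factor $|Z(\Q)\cap G_\infty K|$; since $\Phi(1)=h(1)\,\Phi_\infty(1)/\vol(K_{\mathcal T})$ and $\Phi_\infty(1)=\mu_\infty(\widehat{\Phi_\infty})=\mu_\infty(tD)+O(t^{\mathbf d-1})$ by the Plancherel formula, this term equals $v(K)\,\mu_\infty(tD)\,h(1)$ up to an admissible error. It then remains to bound everything else by $O_{D,K}(\|h\|_1(1+\mathfrak{ms}(h))^{\mathbf r}\,t^{\mathbf d-\delta})$: on the geometric side the non-central (unipotent and nontrivial semisimple or mixed) classes, together with their weighted orbital integrals; on the spectral side the residual and the continuous, i.e. Eisenstein, contributions.

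The heart of the argument, and the step I expect to be the main obstacle, is the bound for the non-identity geometric terms. What is needed are estimates for the weighted orbital integrals of $\Phi$ that are linear in $\|h\|_1$ and polynomial of degree at most $\mathbf r$ in the support parameter $\mathfrak{ms}(h)$; this is exactly the Finis--Lapid machinery of property (TWN+) and of bounded conductors, which is available here because $G$ is split. Combining these with the decay of $\Phi_\infty$ and with the archimedean and $p$-adic (Shalika germ) expansions of orbital integrals yields a genuine power saving $t^{\mathbf d-\delta}$ rather than merely $o(t^{\mathbf d})$; heuristically, the singular geometric terms see only a lower-dimensional stratum of the Plancherel density, and this is the source of the gain $\delta$. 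On the spectral side, the residual spectrum is controlled by induction on the split rank, and the continuous spectrum by Finis--Lapid--M\"uller-type bounds on the winding numbers of the global intertwining operators --- again using that $G$ is a Chevalley group --- with the same exponent $t^{\mathbf d-\delta}$ and an acceptable dependence on $h$. Subtracting the main term from the resulting spectral identity and letting $\phi_t^{\pm}\to\mathbf 1_{tD}$ completes the proof.
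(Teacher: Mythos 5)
The paper gives no proof of this theorem: it is quoted verbatim from Finis--Lapid \cite[Theorem 6.14]{FL}, so the only ``paper's own proof'' is the citation. Your sketch is a reasonable high-level reconstruction of what \cite{FL} actually does: apply the Arthur--Selberg trace formula to $\Phi_\infty\otimes\mathbf 1_{K_{\mathcal T}}\otimes h$ with $\Phi_\infty$ chosen by the spherical Paley--Wiener theorem so that $\widehat{\Phi_\infty}$ approximates $\mathbf 1_{tD}$, extract the main term $v(K)\,\mu_\infty(tD)\,h(1)$ from the central (identity) contribution on the geometric side via the Plancherel formula, and then bound the remaining geometric and spectral terms with a power saving in $t$ and with the stated polynomial dependence on $\|h\|_1$ and $\mathfrak{ms}(h)$. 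The role of the Chevalley hypothesis --- to make available the property (TWN+) for global normalizing factors and the bounded-degree property for local intertwining operators, which control the continuous spectrum --- and the induction on rank for the residual spectrum are also correctly identified.

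One mislabeling worth flagging: (TWN+) and bounded degree are \emph{spectral-side} inputs (they bound the logarithmic derivatives of intertwining operators appearing in Arthur's spectral expansion), whereas you invoke them for the non-identity geometric terms. In \cite{FL} the geometric side is instead handled by direct estimates on (weighted) orbital integrals, uniform in the support and $L^1$-norm of the non-archimedean test function, together with the rapid decay of $\Phi_\infty$; this is what produces the factor $\|h\|_1(1+\mathfrak{ms}(h))^{\mathbf r}$. The overall strategy you describe nevertheless matches \cite{FL}, and since the present paper merely cites that result, your outline is consistent with how the theorem is used here.
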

\begin{thm}{\rm \cite[Theorem 3.11]{FL2} and \cite[Theorem 1.1]{FM}}. \label{thm:FM}
Assume that $G$ is a classical group or an exceptional group of type $G_2$. 
When $G$ is not of types $A_1$ and $A_2$, for any $h\in\mathcal{H}^\mathcal{T}$, we have
\[
\sum_{f\in \mathcal{F}(tD,K)} a_f(h) =v(K)\, \mu_\infty(tD) \, h(1)+O_{D,K}\left( \|h\|_1\, t^{\mathbf{d}-1} \right),\quad t\to\infty.
\]
When $G$ is of type $A_1$ (resp. $A_2$), the remainder term in the above equation is replaced by $\|h\|_1\, t^{\mathbf{d}-\frac{1}{2}} \, \log(1+t)$ (resp. $\|h\|_1\, t^{\mathbf{d}-1} \, (\log(1+t))^2$).
\end{thm}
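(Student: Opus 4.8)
The plan is to obtain Theorem~\ref{thm:FM} from the Arthur--Selberg trace formula in the form developed by Finis--Lapid and Finis--Matz; since the statement is quoted from their work, the ``proof'' is largely a matter of matching normalizations, but let me describe the mechanism. Fix the adelic test function
\[
\phi \;=\; \phi_\infty \,\otimes\, \mathbf{1}_{K_\mathcal{T}} \,\otimes\, h \,\otimes\, \Bigl(\bigotimes_{p \notin \mathcal{T}\sqcup S} \mathbf{1}_{K_p}\Bigr),
\]
where $\phi_\infty \in C_c^\infty(K_\infty\backslash G_\infty/ K_\infty)$ is chosen, via the Paley--Wiener theorem for the spherical transform, so that $\widehat{\phi_\infty}$ sandwiches the characteristic function $\mathbf{1}_{tD}$ on $i\mathfrak{t}^*/W$ between functions differing from it only on an $O(1/t)$-neighborhood of $\partial(tD)$, with $\phi_\infty$ supported in a ball of radius $O(\log t)$. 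Because $D$ has rectifiable boundary and $\mu_\infty$ has a smooth density, that boundary layer carries $\mu_\infty$-mass $O(t^{\mathbf{d}-1})$; combined with an auxiliary nonnegative archimedean function majorizing $|\widehat{\phi_\infty} - \mathbf{1}_{tD}|$, used to control the a priori signed eigenvalues $a_f(h)$ near the boundary, this accounts for part of the stated remainder.

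Evaluating the spectral side of the trace formula for $\phi$ isolates $\sum_{f \in \mathcal{F}(tD,K)} a_f(h)$ up to the boundary error above, the contributions of the residual and continuous (Eisenstein) spectrum being of lower order in $t$: their spectral measures are carried by lower-dimensional strata or grow more slowly, and the relevant logarithmic derivatives of intertwining operators are controlled by the bounds of Finis--Lapid. On the geometric side, the identity conjugacy class contributes exactly $v(K)\,\mu_\infty(tD)\,h(1)$ up to the same boundary error, since $\phi(1)=\phi_\infty(1)\,h(1)$ and $\phi_\infty(1)=\mu_\infty(\widehat{\phi_\infty})$. Every remaining class contributes a (possibly weighted) orbital integral of $\phi$; the central non-identity classes are $O(t^{\mathbf{d}-1})$ after inserting the asymptotics of $\mu_\infty(tD)$, while the non-central classes --- elliptic, hyperbolic, and, most delicately, the unipotent terms of Arthur's fine expansion --- must be bounded by $O_{D,K}(\|h\|_1\, t^{\mathbf{d}-1})$ uniformly in $p_S$. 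It is exactly this uniform control, together with the density of the tempered spectrum near the Weyl chamber walls, that weakens for groups of types $A_1$ and $A_2$, where one obtains only the $t^{\mathbf{d}-\frac{1}{2}}\log(1+t)$ and $t^{\mathbf{d}-1}(\log(1+t))^2$ remainders.

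The main obstacle is precisely this bounding of the geometric side with the sharp power of $t$ and with the $h$-dependence reduced to $\|h\|_1$ alone --- in contrast to Theorem~\ref{thm:FL}, where an extra factor $(1+\mathfrak{ms}(h))^{\mathbf{r}}$ is present. This needs the sharp estimates for $p$-adic (weighted) orbital integrals and for the coefficients in Arthur's unipotent distributions established by Finis--Matz, together with the archimedean and spectral-side bounds of Finis--Lapid \cite{FL}. Granting those inputs, one assembles the estimates above and passes to the limit $\widehat{\phi_\infty}\to\mathbf{1}_{tD}$; for the present paper we simply invoke \cite[Theorem~3.11]{FL2} and \cite[Theorem~1.1]{FM} in the normalization fixed here.
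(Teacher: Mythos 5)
Your proposal is consistent with what the paper does: the paper gives no independent proof of this theorem and simply quotes it from \cite[Theorem 3.11]{FL2} and \cite[Theorem 1.1]{FM}, which is exactly where your sketch lands. Your outline of the trace-formula mechanism behind those results is a reasonable high-level summary (minor quibbles: the non-identity central classes are already absorbed into the main term via the factor $|Z(\Q)\cap G_\infty K|$ in $v(K)$, rather than contributing to the $O(t^{\mathbf{d}-1})$ error, and the weaker $A_1$/$A_2$ remainders stem from the Eisenstein/intertwining-operator estimates rather than tempered-spectrum density near the walls), but since the step taken here is a citation, the approaches coincide.
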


Note that the factor $v(K) \, \mu_\infty$ does not depend on the choice of a Haar measure $dg_\infty$ on $G_\infty$. 
There are some other general results for Maass forms.
See \cite{E} for quasi-split groups, and \cite{RW} for non-trivial $K_\infty$-types $\tau$ and compact arithmetic quotients. 

\begin{prop}\label{prop:Maass}
Assume that $G$ is a Chevalley  group or a classical group. 
Then there exist $a$, $b\in\N$ and $\delta>0$ such that, for any $\kappa\in\N$, any $S$ such that $S\cap \mathcal{T}=\emptyset$, any $h\in \mathcal{H}^\mathrm{ur}_\kappa(G_S)$,
\[
\frac{1}{v(K)\, \mu_\infty(tD) }\sum_{f\in \mathcal{F}(tD,K)} a_f(h) =  \mu_S^{\mathrm{pl},\mathrm{ur}}( \widehat{h} )+O\left( t^{-\delta} \, p_S^{a\kappa+b}\, \sup_{x\in G_S}|h(x)|  \right),\quad t\to\infty.
\]
\end{prop}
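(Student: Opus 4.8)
The plan is to feed the Hecke operator trace formula of Theorem \ref{thm:FL} (or Theorem \ref{thm:FM}) with a carefully chosen test function and then repackage the main term as a Plancherel integral using the Satake transform. Given $h\in\mathcal H^{\mathrm{ur}}_\kappa(G_S)$, expand its Satake transform as in \eqref{eq:h_p}, $\widehat h=\sum_{\lambda} c_\lambda\chi_\lambda$ (a finite sum, with the $c_\lambda$ depending on $h$). Apply the asymptotic formula term by term to $h$ (viewed in $\mathcal H^{\mathcal T}$ via the embedding fixing $\mathbf 1_{K_p}$ at the remaining places). The main term of Theorem \ref{thm:FL}/\ref{thm:FM} is $v(K)\,\mu_\infty(tD)\,h(1)$, and since $h(1)=\mu_S^{\mathrm{pl},\mathrm{ur}}(\widehat h)$ by the defining property of the Plancherel measure (Section \ref{sec:pl}), after dividing by $v(K)\mu_\infty(tD)$ this produces exactly $\mu_S^{\mathrm{pl},\mathrm{ur}}(\widehat h)$. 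So the content is entirely in controlling the error term uniformly in $\kappa$, $S$ and $h$.

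Next I would bound the error factors in Theorem \ref{thm:FL} (the Chevalley case) and Theorem \ref{thm:FM} (the classical case) in terms of the data $\kappa$, $p_S$ and $\sup_{x\in G_S}|h(x)|$. For the $L^1$-norm: by Lemma \ref{lem:1}, for each prime $p\in S$ and each local component of $h$ lying in $\mathcal H^{\mathrm{ur}}_\kappa(G_p)$ we have $\|h_p\|_{1,p}\ll p^{a\kappa+b}\sup_x|h_p(x)|$; multiplying over the finitely many places of $S$ (the places outside $\mathcal T\sqcup S$ contribute $\|\mathbf 1_{K_p}\|_{1,p}=1$) gives $\|h\|_1\ll p_S^{a\kappa+b}\sup_{x\in G_S}|h(x)|$, after enlarging $a,b$ to absorb the number of primes in $S$ (which, since $p_S=\prod_{p\in S}p\ge 2^{|S|}$, is $\le \log_2 p_S$, so $\prod_{p\in S}p^{a\kappa+b}=p_S^{a\kappa+b}$ directly — no enlargement needed). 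For the Chevalley case one must also handle the factor $(1+\mathfrak{ms}(h))^{\mathbf r}$: since $h$ is supported on the union of double cosets $K_p\omega(p)K_p$ with $\|\omega\|\le\kappa$ for $p\in S$ and on $K_p$ elsewhere, the height of any $x$ in the support is polynomially bounded in $p_S$ and exponentially bounded in $\kappa$, so $\mathfrak{ms}(h)\ll \kappa\log p_S + \log p_S \ll (\kappa+1)\log p_S$; hence $(1+\mathfrak{ms}(h))^{\mathbf r}\ll (\kappa+1)^{\mathbf r}(\log p_S)^{\mathbf r}$, which is $\ll p_S^{\varepsilon\kappa+\varepsilon}$ and can be swallowed by bumping $a,b$ up by $1$. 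In the classical case (Theorem \ref{thm:FM}) there is no $\mathfrak{ms}$ factor and this step is simpler.

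Combining, the error in Theorem \ref{thm:FL}/\ref{thm:FM} is $O_{D,K}(p_S^{a\kappa+b}\,\sup_{x\in G_S}|h(x)|\,t^{\mathbf d-\delta})$ for a suitable $\delta>0$ (in the classical non-$A_1,A_2$ case $\delta=1$; for $A_1$, $A_2$ use $\delta<\tfrac12$, $\delta<1$ respectively, absorbing the logarithms into a slightly smaller $\delta$). Dividing through by $v(K)\,\mu_\infty(tD)=c_D\,t^{\mathbf d}(1+O(t^{-1}))$ converts $t^{\mathbf d-\delta}$ into $t^{-\delta}$ and leaves the stated shape, with the implied constant depending on $D$ and $K$ (and on $G$, hence on $\mathbf r$, $\mathbf d$), but not on $\kappa$, $S$ or $h$. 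I expect the only real subtlety to be the uniformity of the bound over the prime set $S$: one must check that the constants $a,b$ coming out of Lemma \ref{lem:1} (and, in the Chevalley case, the exponents controlling heights of elements in Cartan double cosets) are genuinely independent of $p$, which is exactly what Lemma \ref{lem:1} asserts, and that the finitely-many-primes product telescopes cleanly into a power of $p_S$ rather than a product of distinct prime powers — this is immediate since $\prod_{p\in S}p^{a\kappa+b}=\left(\prod_{p\in S}p\right)^{a\kappa+b}=p_S^{a\kappa+b}$. Everything else is a direct substitution.
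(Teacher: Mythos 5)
Your proposal is correct and is precisely the unpacking of the paper's one-line proof, which simply cites Lemma \ref{lem:1} and Theorems \ref{thm:FL}--\ref{thm:FM}. The two substantive points you add beyond the paper's sentence --- the identification $h(1)=\mu_S^{\mathrm{pl,ur}}(\widehat h)$ from \S\ref{sec:pl}, and the absorption of the $(1+\mathfrak{ms}(h))^{\mathbf r}$ factor into the exponents $a,b$ via $\mathfrak{ms}(h)\ll(\kappa+1)\log p_S$ --- are exactly right, so this is the same argument, just spelled out.
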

\begin{proof}
This follows from Lemma \ref{lem:1} and Theorems \ref{thm:FL} and \ref{thm:FM}.
\end{proof}

By this proposition, we obtain the simultaneous vertical Sato-Tate theorem as follows.
\begin{prop}\label{prop:Maass2}
Assume that $G$ is a Chevalley  group or a classical group, and let $\delta>0$ be the constant as in Proposition \ref{prop:Maass}. 
Take a sequence $\{ c_\lambda \}_{\lambda\in P^+}$ such that $c_\lambda=0$ except finitely many $\lambda$.  
For each prime $p$, we define a function $h_p\in\mathcal{H}^\mathrm{ur}(G_p)$ by $\widehat{h_p}=\sum_\lambda c_\lambda\cdot \chi_\lambda$, see  \eqref{eq:h_p}. 
Then, there exist positive integers  $a'$, $b'$, $\kappa\in\N$ such that, for any $u\in \mathbb{N}$, any $(r_1,r_2,\ldots,r_u)\in\mathbb{N}^u$, and any distinct primes $p_1$, $p_2,\ldots,p_u$ such that $p_i\notin \mathcal{T}$, 
 we have $h_{p_j}\in \mathcal{H}^\mathrm{ur}_\kappa(G_p)$ and 
\begin{multline*}
\frac{1}{v(K)\, \mu_\infty(tD) }\sum_{f\in \mathcal{F}(tD,K)} a_f(h_{p_1})^{r_1}\, a_f(h_{p_2})^{r_2}\cdots  a_f(h_{p_u})^{r_u} \\
= \int_{(\widehat{T}_c/ W)^u} \widehat{h_{p_1}}(x_1)^{r_1}\cdots \widehat{h_{p_u}}(x_u)^{r_u}\, \mu_S(x_1,\dots,x_u) + O(p_S^{(a' \kappa+ b')r} t^{-\delta}),\quad t\to\infty,
\end{multline*}
where we set $r\coloneqq r_1+\cdots+r_u$ and $S\coloneqq \{ p_1,\ldots,p_u\}$. 
See \eqref{eq:inden} for the meaning of $\widehat{h_{p_j}}(x_j)$. 
\end{prop}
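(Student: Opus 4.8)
The plan is to reduce the multiplicative statement to the additive one of Proposition~\ref{prop:Maass}, using that for a Hecke eigenform $f$ the map $h\mapsto a_f(h)$ is an algebra homomorphism on the commutative algebra $\mathcal{H}^\mathcal{T}$ (since $T_{h_1\ast h_2}=T_{h_1}T_{h_2}$ and $\mathcal{H}^\mathcal{T}$ is commutative). Put $S\coloneqq\{p_1,\dots,p_u\}$ and
\[
H_S\coloneqq h_{p_1}^{\ast r_1}\ast\cdots\ast h_{p_u}^{\ast r_u}=\bigotimes_{j=1}^{u}\bigl(h_{p_j}\bigr)^{\ast r_j}\ \in\ \mathcal{H}^\mathrm{ur}(G_S)\subset\mathcal{H}^\mathcal{T},
\]
where $\ast$ is convolution and the tensor factorization is available because the $p_j$ are distinct. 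Then $a_f(h_{p_1})^{r_1}\cdots a_f(h_{p_u})^{r_u}=a_f(H_S)$ for every $f\in\mathcal{F}(tD,K)$, so the left-hand side of the proposition equals $\frac{1}{v(K)\,\mu_\infty(tD)}\sum_f a_f(H_S)$, to which I apply Proposition~\ref{prop:Maass}.

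To do so I control two quantities attached to $H_S$. First, fix $\kappa$ so large that $\tau_{\mu,p}\in\mathcal{H}^\mathrm{ur}_\kappa(G_p)$ for every $\mu\preceq\lambda$ with $c_\lambda\neq0$; by the triangular form of the Satake transform recalled in the proof of Lemma~\ref{lem:1222}, this gives $h_p=\mathcal{S}_p^{-1}\bigl(\sum_\lambda c_\lambda\chi_\lambda\bigr)\in\mathcal{H}^\mathrm{ur}_\kappa(G_p)$ for every $p\notin\mathcal{T}$. Because the Cartan invariant $\|\cdot\|$ is, up to an absolute constant $c_0$, sub-additive under multiplication in $G_p$, the convolution power $(h_{p_j})^{\ast r_j}$ lies in $\mathcal{H}^\mathrm{ur}_{c_0\kappa r_j}(G_{p_j})$, so $H_S\in\mathcal{H}^\mathrm{ur}_{c_0\kappa r}(G_S)$ with $r=r_1+\cdots+r_u$. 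Second, Lemma~\ref{lem:1222} gives $\sup_{x\in G_{p_j}}|h_{p_j}(x)|\ll p_j^{a_1\kappa+b_1}$ with constant depending only on the fixed family $\{c_\lambda\}$; combining this with Lemma~\ref{lem:1} and the elementary bound $\sup_x|g_1\ast\cdots\ast g_{r_j}(x)|\le\|g_1\|_{1,p_j}\cdots\|g_{r_j-1}\|_{1,p_j}\sup_x|g_{r_j}(x)|$ yields $\sup_{x\in G_{p_j}}|(h_{p_j})^{\ast r_j}(x)|\ll p_j^{(A\kappa+B)r_j}$ for suitable $A,B\in\N$, whence, since $p_j\le p_S$ for each $j$,
\[
\sup_{x\in G_S}|H_S(x)|=\prod_{j=1}^{u}\sup_{x\in G_{p_j}}\bigl|(h_{p_j})^{\ast r_j}(x)\bigr|\ \ll\ \prod_{j=1}^{u}p_j^{(A\kappa+B)r_j}\ \le\ p_S^{(A\kappa+B)r}.
\]

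Plugging these into Proposition~\ref{prop:Maass} applied to $H_S$ gives
\[
\frac{1}{v(K)\,\mu_\infty(tD)}\sum_{f\in\mathcal{F}(tD,K)}a_f(H_S)=\mu_S^{\mathrm{pl},\mathrm{ur}}(\widehat{H_S})+O\bigl(t^{-\delta}\,p_S^{\,ac_0\kappa r+b}\cdot p_S^{\,(A\kappa+B)r}\bigr),
\]
and after using $r\ge1$ to absorb the fixed exponent $b$ and rescaling the constants, the error is $O(p_S^{(a'\kappa+b')r}t^{-\delta})$ for suitable $a',b'\in\N$ (one may take $a'=ac_0+A$). It then remains to identify the main term: the hat map is an algebra isomorphism sending convolution to pointwise product and is multiplicative across the factors of $\mathcal{H}^\mathrm{ur}(G_S)=\bigotimes_j\mathcal{H}^\mathrm{ur}(G_{p_j})$, so $\widehat{H_S}(x_1,\dots,x_u)=\prod_{j=1}^u\widehat{h_{p_j}}(x_j)^{r_j}$; and since $\mu_S^{\mathrm{pl},\mathrm{ur}}$ pulls back to $\mu_S=\prod_j\mu_{p_j}$ on $(\widehat{T}_c/W)^u$, one obtains $\mu_S^{\mathrm{pl},\mathrm{ur}}(\widehat{H_S})=\int_{(\widehat{T}_c/W)^u}\prod_j\widehat{h_{p_j}}(x_j)^{r_j}\,\mu_S(x_1,\dots,x_u)$, which is the asserted main term. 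I expect the main obstacle to be the bookkeeping in the second paragraph — showing that the Cartan invariant grows at most linearly under products (so that the truncation level of $H_S$ is $O(\kappa r)$) and that the per-prime sup-norm and $L^1$ estimates multiply into a clean power of $p_S$ rather than an uncontrolled product of the individual $p_j$, so that the final error is uniformly of the shape $p_S^{(a'\kappa+b')r}$.
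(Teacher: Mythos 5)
Your proposal is correct and follows essentially the same route as the paper: form the convolution-power-tensor-product $H_S$, check via Lemma~\ref{lem:1222}, Lemma~\ref{lem:1}, and the three standard convolution properties (truncation level adds, sup-norm of a convolution is bounded by $L^1$-norms times a sup-norm, the Satake/hat map turns convolution into pointwise product) that $H_S$ lies in $\mathcal{H}^\mathrm{ur}_{O(\kappa r)}(G_S)$ with $\sup|H_S|\ll p_S^{O(\kappa)r}$, and then feed $H_S$ into Proposition~\ref{prop:Maass}. The only cosmetic difference is that you allow a constant $c_0$ in the sub-additivity of the truncation parameter while the paper records $\mathcal{H}^\mathrm{ur}_{\kappa_1}\ast\mathcal{H}^\mathrm{ur}_{\kappa_2}\subset\mathcal{H}^\mathrm{ur}_{\kappa_1+\kappa_2}$ (i.e.\ $c_0=1$), which does not affect the conclusion.
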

\begin{proof}
For each $\lambda\in P^+$, we set $N(\lambda)\coloneqq \{ \mu \in P^+ \mid \mu\le \lambda\}$. 
Since $h_{p_j}=\sum_\lambda c_\lambda\cdot \mathcal{S}_{p_j}^{-1}(\chi_\lambda)$ is a finite sum, there exists a constant $\kappa$ so that $h_{p_j}\in \mathcal{H}^\mathrm{ur}_\kappa(G_{p_j})$ for any $j$ and $\tau_\mu\in \mathcal{H}^\mathrm{ur}_\kappa(G_{p_j})$ for any $\mu\in\cup_{\lambda\in P^+, c_\lambda\neq 0}N(\lambda)$. 
Under this condition, by Lemma \ref{lem:1222}, 
\begin{equation}\label{eq:1222h}
\sup_{x\in G_{p_j}} |h(x)| \ll p_j^{a_3\kappa+b_3} \qquad  (1\le j\le u) 
\end{equation}
for some $a_3$, $b_3\in \mathbb{N}$. 

%, hence\begin{equation}\label{eq:1222}\sup_{x\in G_S} |h_{p_j}(x)| \ll p_j^{a''\kappa+b''}\end{equation}
 
Let $w_1\in\mathcal{H}^\mathrm{ur}_{\kappa_1}(G_p)$ and $w_2\in\mathcal{H}^\mathrm{ur}_{\kappa_2}(G_p)$, 
and write $w_1*w_2$ for the convolution of $w_1$ and $w_2$. 
The following equations are known or can be easily shown by the known results, cf. \cite{Gr}. 
\begin{align}
& w_1*w_2\in  \mathcal{H}^\mathrm{ur}_{\kappa_1+\kappa_2}(G_p) , \label{eq:ww1}  \\
& \sup_{x\in G_p}|w_1*w_2(x)|\le  \|w_2\|_{1,p}\, \sup_{x\in G_p}|w_1(x)| , \label{eq:ww2} \\
& \widehat{w_1*w_2}=\widehat{w_1}\, \widehat{w_2}. \label{eq:ww3}
\end{align}
%The equation \eqref{eq:ww1} follows from the relation \cite[(2.9)]{Gr}
%\[\tau_{\lambda,p}*\tau_{\mu,p}=\tau_{\lambda+\mu,p}+\sum_{\nu \prec \lambda+\mu}n_{\lambda,\mu}(\nu) \tau_{\nu,p}, \qquad  n_{\lambda,\mu}(\nu)\in\C, \] for $\lambda$, $\mu\in P^+$, see \cite[(2.9)]{Gr}. 
%The equation \eqref{eq:ww2} follows from the definition of the convolution.  

We set
\[
h_{r_1,r_2,\ldots,r_u}\coloneqq \bigotimes_{1\le j\le u}h_{p_j}^{(r_j)},\quad  
h_{p_j}^{(r_j)}\coloneqq \overbrace{h_{p_j}*h_{p_j}*\cdots*h_{p_j}}^{r_j}. 
\]
Using \eqref{eq:1222h}, \eqref{eq:ww2}, and Lemma \ref{lem:1}, we have
\[
\sup_{x\in G_S}|h_{r_1,r_2,\ldots,r_u}(x)| \le \prod_{j=1}^u \| h_{p_j} \|^{r_j-1}_{1,p}\sup_{x\in G_{p_j}}|h_{p_j}(x)| \ll p_S^{(a_4\kappa+b_4)r}.
\]
for some $a_4$, $b_4\in\mathbb{N}$. 
By \eqref{eq:ww1}, $h_{r_1,r_2,\ldots,r_u}\in \mathcal{H}^\mathrm{ur}_{r\kappa}(G_S)$. 
In addition, by \eqref{eq:ww3} 
\[
a_f(h_{r_1,r_2,\ldots,r_u})=\widehat{h_{r_1,r_2,\ldots,r_u}}(\pi_{p_1}\otimes \pi_{p_2}\otimes\cdots\otimes \pi_{p_u})=\prod_{j=1}^u \widehat{h_{p_j}}(\pi_{p_j})^{r_j}=\prod_{j=1}^u a_f(\pi_{p_j})^{r_j}.
\]
By \eqref{eq:ww3} and the identification \eqref{eq:h_p}, 
\[
\mu_S^{\mathrm{pl},\mathrm{ur}}(\widehat{h_{r_1,r_2,\ldots,r_u}})= \mu_S(\widehat{h_{p_1}}^{r_1}\widehat{h_{p_2}}^{r_2}\cdots \widehat{h_{p_u}}^{r_u}).
\]
Therefore, the proof is completed by using Proposition \ref{prop:Maass} for $h=h_{r_1,r_2,\ldots,r_u}$.   
\end{proof}

\subsection{A Family of holomorphic and quaternionic cusp forms; weight aspect}\label{hol}
%A maximal compact subgroup $K_\infty$ of $G(\R)$ is isomorphic to $SU(2)\times SU(2)/\{\pm 1\}$. Write $\tau_k$ for the irreducible representation $\mathrm{Sym}^{2k}\boxtimes \mathbf{1}_{SU(2)}$ of $K_\infty$, where $\mathrm{Sym}^{2k}$ is the symmetric representation of $SU(2)$ and $\mathbf{1}_{SU(2)}$ is the trivial representation of $SU(2)$. 

Consider one of the following two cases:
\begin{itemize}
\item Suppose $G=Sp_{2n}$ and $\tau_k$ denotes the $1$-dimensional representation $x\mapsto \det^k(x)$ of $K_\infty\cong U_n$. 
Let $\sigma_k$ denote the holomorphic discrete series of $G_\infty$ with the minimal $K_\infty$-type $\tau_k$. 
\item Suppose $G$ is an exceptional group of type $G_2$ or $F_4$. 
In case of $G_2$ (resp. $F_4$), we have $K_\infty \cong (SU_2\times SU_2)/\{\pm 1\}$ (resp. $K_\infty \cong (SU_2\times Sp_3)/\{\pm 1\}$), where $Sp_3$ is the compact symplectic group of rank 3. 
Write $\tau_k$ for the irreducible representation $\mathrm{Sym}^{2k}\boxtimes \mathbf{1}$ of $K_\infty$, where $\mathrm{Sym}^{2k}$ is the symmetric representation of $SU_2$ and $\mathbf{1}$ is the trivial representation. 
Let $\sigma_k$ denote the quaternionic discrete series of $G_\infty$ with the minimal $K_\infty$-type $\tau_k$. 
\end{itemize}
A congruence subgroup $\Gamma$ of $G(\Q)$ is obtained by $G(\Q)\cap K$.
Under this condition, $\mathcal{F}(\sigma_k,\tau_k,K)$ consists of scalar valued Siegel cusp forms or quaternionic cusp forms of weight $k$ for $\Gamma$. 
Let $d(\sigma_k)$ denote the formal degree of $\sigma_k$. 
By Harish-Chandra's formula for the formal degrees, there exists a constant $c_\infty>0$ such that
\[
d(\sigma_k)=c_\infty \, k^{\frac{\mathbf{d}}{2}+w_G} +O \left(k^{\frac{\mathbf{d}}{2}+w_G-1}\right), \quad w_G\coloneqq \begin{cases} 0 & \text{if $G=Sp_{2n}$}, \\ 1 & \text{if $G$ is of type $G_2$ or $F_4$}. 
\end{cases}
\]
Notice that $\frac{\mathbf{d}}{2}+w_G=\frac{n^2+n}{2}$ when $G=Sp_{2n}$; $\frac{\mathbf{d}}{2}+w_G=5$ when $G$ is of type $G_2$; and  $\mathbf{d}=15$ when $G$ is of type $F_4$. 

\begin{thm}\label{thm:STW}
Then for any $\varepsilon>0$ and any $h\in\mathcal{H}^\mathcal{T}$, 
\[
\sum_{f\in \mathcal{F}(\sigma_k,\tau_k,K)} a_f(h) =v(K)\, d(\sigma_k) \, h(1)+O_{\varepsilon,K}\left( \|h\|_1\, k^{\frac{\mathbf{d}-\mathbf{r}}{2}+w_G+\varepsilon} \right) ,\quad k\to\infty.
\]
\end{thm}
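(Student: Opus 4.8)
The plan is to derive Theorem~\ref{thm:STW} from the Arthur--Selberg trace formula for the test function $f_{\sigma_k}\otimes h$, where $f_{\sigma_k}$ is a pseudo-coefficient (or the Harish-Chandra character truncated against the minimal $K_\infty$-type $\tau_k$) of the discrete series $\sigma_k$ on $G_\infty$, and $h\in\mathcal{H}^\mathcal{T}$ is the prescribed Hecke operator on the finite part. The spectral side, after projecting onto the $\tau_k$-isotypic cuspidal part, collapses to $\sum_{f\in\mathcal{F}(\sigma_k,\tau_k,K)}a_f(h)$ up to contributions from the residual and continuous spectrum; because $\sigma_k$ is an (integrable, for $k$ large) discrete series and $\tau_k$ is its minimal $K_\infty$-type, those non-cuspidal contributions are either zero or negligible. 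On the geometric side, the identity contribution produces the main term $v(K)\,d(\sigma_k)\,h(1)$: the volume factor $\mathrm{vol}(G(\Q)\backslash G(\A))/\mathrm{vol}(K)$ together with the $|Z(\Q)\cap G_\infty K|$ from $v(K)$, and $f_{\sigma_k}(1)=d(\sigma_k)$ by Harish-Chandra's formal-degree formula (which is exactly why $d(\sigma_k)$ appears, with the asymptotics $c_\infty k^{\mathbf{d}/2+w_G}+\cdots$ recorded before the theorem). The central claim is then that all remaining geometric terms are $O_{\varepsilon,K}(\|h\|_1\,k^{(\mathbf{d}-\mathbf{r})/2+w_G+\varepsilon})$.

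Concretely I would proceed as follows. First, set up the trace formula with $f=f_{\sigma_k}\otimes h\otimes(\otimes_{p\notin\mathcal{T}}\mathbf{1}_{K_p})$ and record that its orbital integrals at the archimedean place are controlled by the $L^1$-norm of $f_{\sigma_k}$ and, more importantly, that $f_{\sigma_k}$ is supported near the elliptic set so that only elliptic (indeed central-modulo-elliptic) conjugacy classes contribute on the geometric side. Second, isolate the identity term, compute it as $v(K)\,d(\sigma_k)\,h(1)$, and peel it off. Third, bound each remaining elliptic term: this is where one uses known bounds for orbital integrals of pseudo-coefficients of discrete series — the relevant estimate is that for a non-central semisimple $\gamma$, the archimedean orbital integral $O_\gamma(f_{\sigma_k})$ grows at most polynomially in $k$ of degree roughly $(\mathbf{d}-\mathbf{r})/2+w_G$ (the "$\mathbf{r}$" saving comes from the split rank of the centralizer, and $w_G$ from the quaternionic normalization), while the number of relevant $\gamma$ modulo conjugacy is bounded in terms of $K$ and the support of $h$, contributing the $\|h\|_1$ factor; the $\varepsilon$ absorbs logarithmic losses and the boundary contributions from truncation of the continuous spectrum. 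Finally, one checks that the residual and continuous spectral contributions are of the same or smaller order, so they can be absorbed into the error term.

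The main obstacle — and the part that genuinely requires care rather than bookkeeping — is the uniform bound on the archimedean orbital integrals $O_\gamma(f_{\sigma_k})$ as $k\to\infty$, with the correct exponent $(\mathbf{d}-\mathbf{r})/2+w_G+\varepsilon$ and with the implied constant independent of $\gamma$ (only depending on the compact support determined by $h$ and $K$). For holomorphic Siegel modular forms ($G=Sp_{2n}$, $w_G=0$) this type of estimate is classical and underlies the dimension formulas and earlier work such as \cite{KWY3}; for the quaternionic case ($G$ of type $G_2$ or $F_4$, $w_G=1$) one needs the analogous analysis of quaternionic discrete series pseudo-coefficients, and the shift by $w_G$ must be tracked through Harish-Chandra's character formula. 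A secondary technical point is ensuring that the decomposition of the spectral side cleanly returns $\sum_{f}a_f(h)$ — i.e. that $\mathcal{F}(\sigma_k,\tau_k,K)$ as defined (Hecke eigenforms in $\mathcal{S}(\sigma_k,\tau_k,K)$) matches the cuspidal $\tau_k$-isotypic part picked out by $f_{\sigma_k}$, including multiplicities — but this is a matter of matching definitions rather than a real analytic difficulty.
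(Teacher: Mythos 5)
The paper does not give a proof of this theorem: its ``proof'' is a one-line citation to a paper in preparation \cite{STW}, said to rest on \cite[Theorem 4.1]{FM}. So there is no in-paper argument to compare against in detail, and your proposal has to be judged on its own.

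Your outline is the right framework and is essentially the Finis--Matz framework that \cite{STW} will presumably instantiate: the trace formula applied to $\varphi_{\sigma_k}\otimes h$, where $\varphi_{\sigma_k}$ is a normalized pseudo-coefficient of the discrete series; the identity term equals $v(K)\,d(\sigma_k)\,h(1)$ via the Plancherel identity $\varphi_{\sigma_k}(1)=d(\sigma_k)$; the remaining geometric terms are to be controlled by orbital integral estimates; and the spectral side is to pick out exactly the $\tau_k$-isotypic cuspidal piece modulo controllable remainders. However, there is a genuine gap, and you flag it yourself: the critical estimate --- the uniform, $k$-aspect polynomial bound on the archimedean orbital integrals $O_\gamma(\varphi_{\sigma_k})$ with the precise exponent $\frac{\mathbf{d}-\mathbf{r}}{2}+w_G+\varepsilon$, together with the companion control of the non-elliptic geometric terms and the non-cuspidal spectral terms in the non-invariant trace formula --- is only asserted, with heuristics for where the $\mathbf{r}$ saving and the $w_G$ shift come from, but not derived. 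The claim that the continuous and residual spectral contributions are ``either zero or negligible'' also needs substantial justification; a large part of the Finis--Matz analysis is devoted to exactly this. A smaller slip: a pseudo-coefficient is not ``supported near the elliptic set''; it is compactly supported, and the property actually used is that its semisimple orbital integrals vanish at non-elliptic elements. In short, your sketch correctly identifies the shape of the argument and where the difficulty lies, but the content of the theorem --- the uniform estimates with the stated exponent --- remains unproven.
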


\begin{proof}
This theorem will be proved in the paper in preparation \cite{STW} based on \cite[Theorem 4.1]{FM} of Finis and Matz.
\end{proof}

Note that the factor $v(K) \, d(\sigma_k)$ does not depend on the choice of a Haar measure $dg_\infty$ on $G_\infty$, see \eqref{eq:leadingsp} and \eqref{eq:leadingg2}. 
This formula also implies the asymptotics of dimensions as
\[
\dim \mathcal{S}(\sigma_k,\tau_k,K)=\#\mathcal{F}(\sigma_k,\tau_k,K)= v(K)\, d(\sigma_k) + O_{\varepsilon,K}\left( k^{\frac{\mathbf{d}-\mathbf{r}}{2}+w_G+\varepsilon} \right) ,\quad k\to\infty,
\]
when $h$ is the characteristic function of $K$. 

See the paper \cite{KWY} for a precise asymptotic formula of Siegel cusp forms of degree two $(n=2)$. 
See also the papers \cite{Dalal,ST} for the general results for families of cusp forms related to discrete series. 
Notice that the results in \cite{Dalal,ST} do not cover Theorem \ref{thm:STW}, because their results are available only for the aspect $\min_{\alpha\in R^+} \langle \lambda+\rho, \alpha\rangle\to \infty$, where $R^+$ is a positive root system of $G(\C)$, $\rho=\frac{1}{2}\sum_{\alpha\in R^+}\alpha$, and $\lambda$ varies on highest weights of finite dimensional irreducible algebraic representations of $G_\infty$ corresponding to discrete series. 

\begin{prop}\label{prop:weight}
Then, there exist $a$, $b\in\N$ and $\delta>0$ such that, for any $\kappa\in\N$, any $S$ such that $S\cap \mathcal{T}=\emptyset$, any $h\in \mathcal{H}^\mathrm{ur}_\kappa(G_S)$,
\[
\frac{1}{v(K)\, d(\sigma_k) }\sum_{f\in \mathcal{F}(\sigma_k,\tau_k,K)} a_f(h) =  \mu_S^{\mathrm{pl},\mathrm{ur}}( \widehat{h} )+O\left( k^{-\delta} \, p_S^{a\kappa+b}\, \sup_{x\in G_S}|h(x)|  \right),\quad k\to\infty.
\]
\end{prop}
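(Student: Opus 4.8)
The plan is to derive this directly from Theorem~\ref{thm:STW}, exactly as Proposition~\ref{prop:Maass} is derived from Theorems~\ref{thm:FL} and~\ref{thm:FM}. Given $\kappa$, a finite set of primes $S$ with $S\cap\mathcal{T}=\emptyset$, and $h\in\mathcal{H}^\mathrm{ur}_\kappa(G_S)$, I regard $h$ as an element of $\mathcal{H}^\mathcal{T}$ via $h\otimes(\otimes_{p\notin\mathcal{T}\sqcup S}\mathbf{1}_{K_p})$ and apply Theorem~\ref{thm:STW}. Dividing the resulting identity by $v(K)\,d(\sigma_k)$ and using the defining property $\mu_S^{\mathrm{pl},\mathrm{ur}}(\widehat{h})=h(1)$ of the Plancherel measure recalled in Section~\ref{sec:pl}, I obtain
\[
\frac{1}{v(K)\,d(\sigma_k)}\sum_{f\in\mathcal{F}(\sigma_k,\tau_k,K)}a_f(h)=\mu_S^{\mathrm{pl},\mathrm{ur}}(\widehat{h})+O_{\varepsilon,K}\!\left(\frac{\|h\|_1\,k^{\frac{\mathbf{d}-\mathbf{r}}{2}+w_G+\varepsilon}}{v(K)\,d(\sigma_k)}\right).
\]

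Next I would invoke the Harish-Chandra asymptotic $d(\sigma_k)=c_\infty\,k^{\frac{\mathbf{d}}{2}+w_G}+O(k^{\frac{\mathbf{d}}{2}+w_G-1})$ recorded before the theorem, which gives $d(\sigma_k)\gg k^{\frac{\mathbf{d}}{2}+w_G}$ for $k$ large. Hence the error term is $O(\|h\|_1\,k^{-\frac{\mathbf{r}}{2}+\varepsilon})$, the implied constant now depending on $K$ and $\varepsilon$. Since $G$ is simple, $\mathbf{r}=\mathrm{rank}(G)\ge 1$, so choosing $\varepsilon<\mathbf{r}/2$ and setting $\delta\coloneqq\mathbf{r}/2-\varepsilon>0$ turns this into $O(\|h\|_1\,k^{-\delta})$; from here on $\varepsilon$ is fixed and absorbed, so the error bound carries no subscript.

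It remains to convert $\|h\|_1=\|h\|_1^\mathcal{T}$ into $p_S^{a\kappa+b}\sup_{x\in G_S}|h(x)|$ by means of Lemma~\ref{lem:1}. Expanding $h$ in the basis $\{\otimes_{p\in S}\tau_{\omega_p,p}\mid \|\omega_p\|\le\kappa,\ \omega_p\in X_*(T)\}$ of $\mathcal{H}^\mathrm{ur}_\kappa(G_S)$, the supports of distinct basis elements are pairwise disjoint in $G_S$ and each has sup-norm $1$, so $\sup_{x\in G_S}|h(x)|$ equals the largest absolute value among the coefficients of $h$. Applying Lemma~\ref{lem:1} at each $p\in S$ (the places $p\notin\mathcal{T}\sqcup S$ contributing factor $1$ since $\vol(K_p)=1$) and using the crude bound $O(\kappa^{\mathbf{r}|S|})$ for the number of basis elements that occur, one obtains $\|h\|_1\ll p_S^{a\kappa+b}\sup_{x\in G_S}|h(x)|$ for suitable $a,b\in\N$. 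Substituting this bound into the error term above yields the claimed estimate.

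The argument has no serious obstacle; its whole content is that $d(\sigma_k)$ grows a full factor $k^{\mathbf{r}/2}$ faster than the error term in Theorem~\ref{thm:STW}, so that dividing produces honest polynomial decay in $k$. The only point requiring a little care is the uniformity in $S$ when passing from $\|h\|_1$ to $\sup|h|$, but the number $O(\kappa^{\mathbf{r}|S|})$ of relevant basis vectors is dominated by $p_S^{\varepsilon\kappa}$ and is harmlessly absorbed into $p_S^{a\kappa+b}$.
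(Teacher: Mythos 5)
Your proposal is correct and follows essentially the same route as the paper, whose proof consists of the single sentence ``This follows from Lemma~\ref{lem:1} and Theorem~\ref{thm:STW}'': you apply Theorem~\ref{thm:STW}, divide by $v(K)\,d(\sigma_k)$, use the Harish--Chandra asymptotic for the formal degree to extract the $k^{-\mathbf{r}/2+\varepsilon}$ decay, and then convert $\|h\|_1$ into $p_S^{a\kappa+b}\sup_{x\in G_S}|h(x)|$ via Lemma~\ref{lem:1}. The only blemish is the throwaway remark that the count $O(\kappa^{\mathbf{r}|S|})$ of relevant basis vectors is ``dominated by $p_S^{\varepsilon\kappa}$''; it is dominated by $p_S^{\mathbf{r}\kappa}$ (since $\kappa^{\mathbf{r}}\le 2^{\mathbf{r}\kappa}\le p^{\mathbf{r}\kappa}$ for each $p\in S$), which is all that is needed to absorb it into $p_S^{a\kappa+b}$ after enlarging $a$.
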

\begin{proof}
This  follows from Lemma \ref{lem:1} and Theorem \ref{thm:STW}.
\end{proof}

\begin{prop}\label{prop:weight2}
Let $\delta>0$ be the constant as in Proposition \ref{prop:weight}, and let $\{ c_\lambda \}_{\lambda\in P^+}$ and $h_p$ be the same as in Proposition \ref{prop:Maass2}.  
Then, there exist positive integers  $a'$, $b'$, $\kappa\in\N$ such that, for any $u\in \mathbb{N}$, any $(r_1,r_2,\ldots,r_u)\in\mathbb{N}^u$, and any distinct primes $p_1$, $p_2,\ldots,p_u$ such that $p_i\notin \mathcal{T}$, 
 we have $h_{p_j}\in \mathcal{H}^\mathrm{ur}_\kappa(G_p)$ and 
\begin{multline*}
\frac{1}{v(K)\, d(\sigma_k) }\sum_{f\in \mathcal{F}(\sigma_k,\tau_k,K)} a_f(h_{p_1})^{r_1}\, a_f(h_{p_2})^{r_2}\cdots  a_f(h_{p_u})^{r_u} \\
= \int_{(\widehat{T}_c/ W)^u} \widehat{h_{p_1}}(x_1)^{r_1}\cdots \widehat{h_{p_u}}(x_u)^{r_u}\, \mu_S(x_1,\dots,x_u) + O(p_S^{(a' \kappa+ b')r} k^{-\delta}),\quad k\to\infty,
\end{multline*}
where $r\coloneqq r_1+\cdots+r_u$ and $S\coloneqq \{ p_1,...,p_u\}$. 
\end{prop}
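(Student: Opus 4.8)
The plan is to run the argument of Proposition~\ref{prop:Maass2} essentially verbatim, feeding in the weight-aspect input Proposition~\ref{prop:weight} in place of Proposition~\ref{prop:Maass}, and replacing the archimedean scale $\mu_\infty(tD)$, $t$ by $d(\sigma_k)$, $k$. First I would fix, exactly as in the proof of Proposition~\ref{prop:Maass2}, a positive integer $\kappa$ large enough that $h_{p_j}\in\mathcal{H}^\mathrm{ur}_\kappa(G_{p_j})$ for every $j$ and $\tau_\mu\in\mathcal{H}^\mathrm{ur}_\kappa(G_{p_j})$ for every $\mu\in\bigcup_{\lambda:\,c_\lambda\neq 0}N(\lambda)$, where $N(\lambda)=\{\mu\in P^+\mid\mu\le\lambda\}$; this is possible because $\{c_\lambda\}$ has finite support and $\kappa$ may be chosen independently of the primes. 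Lemma~\ref{lem:1222} then yields the uniform bound $\sup_{x\in G_{p_j}}|h_{p_j}(x)|\ll p_j^{a_3\kappa+b_3}$ as in \eqref{eq:1222h}, with $a_3,b_3\in\N$ independent of $p_j$.

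Next I would introduce the convolution powers $h_{r_1,\dots,r_u}=\bigotimes_{1\le j\le u}h_{p_j}^{(r_j)}$, where $h_{p_j}^{(r_j)}$ is the $r_j$-fold convolution of $h_{p_j}$ with itself, and collect three facts. From \eqref{eq:ww1} one gets $h_{r_1,\dots,r_u}\in\mathcal{H}^\mathrm{ur}_{r\kappa}(G_S)$ with $r=r_1+\cdots+r_u$. From \eqref{eq:ww2}, the sup-bound just obtained, and Lemma~\ref{lem:1} (to convert sup-norms into $L^1$-norms at each place) one gets $\sup_{x\in G_S}|h_{r_1,\dots,r_u}(x)|\ll p_S^{(a_4\kappa+b_4)r}$ for suitable $a_4,b_4\in\N$. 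From \eqref{eq:ww3} together with the identification \eqref{eq:h_p} one gets the multiplicativity $a_f(h_{r_1,\dots,r_u})=\prod_{j=1}^u a_f(h_{p_j})^{r_j}$ as well as $\mu_S^{\mathrm{pl},\mathrm{ur}}(\widehat{h_{r_1,\dots,r_u}})=\mu_S(\widehat{h_{p_1}}^{\,r_1}\cdots\widehat{h_{p_u}}^{\,r_u})$, which via \eqref{eq:inden} is precisely the integral on the right-hand side of the claimed identity.

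Finally I would apply Proposition~\ref{prop:weight} to $h=h_{r_1,\dots,r_u}\in\mathcal{H}^\mathrm{ur}_{r\kappa}(G_S)$: its error term is $O\bigl(k^{-\delta}\,p_S^{a(r\kappa)+b}\,\sup_{x\in G_S}|h_{r_1,\dots,r_u}(x)|\bigr)$, which by the sup-bound above is $O\bigl(p_S^{(a'\kappa+b')r}\,k^{-\delta}\bigr)$ for appropriate $a',b'\in\N$ depending only on $a,b,a_4,b_4$, and matching up the main terms yields the assertion. I do not expect any genuine obstacle here beyond what is already dispatched in Propositions~\ref{prop:Maass}, \ref{prop:Maass2} and \ref{prop:weight}; the one point needing care is that the constants $a,b,a_3,b_3,a_4,b_4,a',b'$ (and the implied constants) be taken uniform in the primes $p_1,\dots,p_u$ and in the exponents $r_1,\dots,r_u$, so that the single error term $O(p_S^{(a'\kappa+b')r}k^{-\delta})$ is legitimate, and the only substantive external ingredient is Theorem~\ref{thm:STW} (hence Proposition~\ref{prop:weight}), whose proof is deferred to \cite{STW}.
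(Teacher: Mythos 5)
Your proposal is correct and follows exactly the route the paper takes: the paper's proof of Proposition~\ref{prop:weight2} is literally the statement that one should apply Proposition~\ref{prop:weight} and repeat the argument of Proposition~\ref{prop:Maass2}, which is precisely what you do, carefully tracking the constants and replacing $\mu_\infty(tD),t$ by $d(\sigma_k),k$. No issues.
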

\begin{proof}
This can be proved by Proposition \ref{prop:weight} and the same argument as in the proof of Proposition \ref{prop:Maass2} 
\end{proof}

\subsection{Level aspect for Siegel cusp forms}\label{level}
In the case of the Siegel cusp forms, we have obtained asymptotic formulas not only for the weight aspect as described above but also for the level aspect. 
Note that, unlike the previous section, $k$ is fixed, and $K$ and $\mathcal{T}$ vary in the asymptotic formula.
Let  $G=Sp_{2n}$. 
Take a natural number $N$. 
For each prime $p$, we set
\[
K_p(N)\coloneqq \{  g\in K_p \mid g\equiv I_{2n} \mod N\Z_p  \} .
\]
An open compact subgroup $K(N)$ of $G(\A_f)$ is defined to be $K(N)=\prod_p K_p(N)$. Then $v(K(N))=c\, N^{n(2n+1)}\prod_{p\mid N}\prod_{j=1}^n (1-p^{-2j})$ for a constant $c>0$.
Set $\mathcal{T}_N\coloneqq \{ p\mid p$ divides $N\}$. 

\begin{thm}{\rm \cite[Theorem 3.10]{KWY2}}. \label{thm:levelsp}
Suppose that $k>n+1$. 
Then, there exist $a$, $b\in\N$ and $c_0>0$ such that, for any $\kappa\in\N$, any $S$ with $S\cap \mathcal{T}_N=\emptyset$, any $h\in \mathcal{H}^\mathrm{ur}_\kappa(G_S)$, if $N\ge c_0 p_S^{2n\kappa}$,
\begin{multline*}
\sum_{f\in \mathcal{F}(\sigma_k,\tau_k,K(N))} a_f(h) =v(K(N))\, d(\sigma_k) \,  \mu_S^{\mathrm{pl},\mathrm{ur}}( \widehat{h} )+O_k\left( p_S^{a\kappa+b}\, N^{-n} v(K(N)) \, \sup_{x\in G_S}|h(x)| \right) , \\ N\to\infty.
\end{multline*}
\end{thm}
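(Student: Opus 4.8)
The plan is to deduce this estimate from the Arthur--Selberg trace formula for $G=Sp_{2n}$, tracking uniformity in the level $N$ and in the Hecke data $(S,\kappa)$, in the spirit of \cite{FM}. Since $\sum_{f\in\mathcal{F}(\sigma_k,\tau_k,K(N))}a_f(h)$ is the trace of the Hecke operator $T_h$ on $\cS(\sigma_k,\tau_k,K(N))$, I would first realize it as the spectral side of the trace formula applied to a test function $\phi=\phi_\infty\otimes\phi_N\otimes h\otimes\bigotimes_{p\notin\mathcal{T}_N\sqcup S}\mathbf{1}_{K_p}$, where $\phi_\infty$ is a pseudo-coefficient of the holomorphic discrete series $\sigma_k$ (so that $\phi_\infty(1)=d(\sigma_k)$ and its spectral transform isolates $\sigma_k$, up to limit-of-discrete-series and $L$-packet corrections that are inert for $k>n+1$), and $\phi_N=\vol(K_p(N))^{-1}\mathbf{1}_{K_p(N)}$ at each $p\mid N$. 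One then checks that the non-tempered and residual contributions to the spectral side are negligible in this weight range, so that the spectral side equals the left-hand side.

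Next I would isolate the main term on the geometric side. The contribution of $\gamma=1$ is $\vol(G(\Q)\backslash G(\A))\,\phi_\infty(1)\,\phi_N(1)\,h(1)=\vol(G(\Q)\backslash G(\A))\,d(\sigma_k)\,\vol(K_p(N))^{-1}\,h(1)$; collecting the volume factors together with the remaining central elements of $Z(\Q)\cap G_\infty K(N)$ gives $v(K(N))\,d(\sigma_k)\,h(1)$, and by the Plancherel identity $\mu_S^{\mathrm{pl},\mathrm{ur}}(\widehat{h})=h(1)$ recalled in Section~2 this is precisely the asserted main term.

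The core of the argument is to bound the remaining, non-central geometric terms by $O_k\bigl(p_S^{a\kappa+b}\,N^{-n}\,v(K(N))\,\sup_{x\in G_S}|h(x)|\bigr)$. Here one uses that any $h\in\mathcal{H}^\mathrm{ur}_\kappa(G_S)$ is supported on elements whose matrix entries have $p$-adic size $\ll p_S^{O(\kappa)}$ (as in the proof of Lemma~\ref{lem:1}), while at each $p\mid N$ the factor $\phi_N$ forces a contributing conjugacy class $\gamma$ to be congruent to a central element modulo $N$; once $N\ge c_0\,p_S^{2n\kappa}$ these constraints cannot hold simultaneously unless $\gamma$ is central, and quantitatively the local orbital integrals at the primes $p\mid N$, weighed against $\phi_N(1)=\vol(K_p(N))^{-1}$, are smaller by a net factor of order $N^{-n}$. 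Feeding this into the Finis--Matz uniform bounds for the (weighted) global orbital integrals and for the number of relevant conjugacy classes, together with $\|h\|_{1,p}\ll p^{O(\kappa)}\sup_{x}|h(x)|$ from Lemma~\ref{lem:1}, produces the claimed error term; the archimedean orbital integrals of $\phi_\infty$ enter only as $k$-independent constants since $k$ is fixed. I expect the main obstacle to lie exactly in this last step: bounding the \emph{entire} geometric side — including the unipotent and mixed terms and the error introduced by Arthur's truncation on the non-anisotropic group $Sp_{2n}$ — uniformly in $N$, $S$ and $\kappa$ with the sharp saving $N^{-n}$ and polynomial dependence on $p_S^\kappa$; this is the analytic heart of \cite{KWY2}.
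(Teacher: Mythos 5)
The paper does not contain a proof of this theorem: the statement is quoted verbatim as \cite[Theorem 3.10]{KWY2}, and the citation \emph{is} the paper's entire justification. So there is no in-paper argument for your sketch to be measured against; the comparison can only be against the expected architecture of the proof in \cite{KWY2}.

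On that basis, your outline is a faithful high-level reconstruction of the trace-formula strategy: a pseudo-coefficient of $\sigma_k$ at the archimedean place (valid since $k>n+1$ makes $\sigma_k$ a genuine discrete series), the normalized indicator $\vol(K_p(N))^{-1}\mathbf{1}_{K_p(N)}$ at $p\mid N$, and $h$ at $S$; extracting the central contribution to obtain $v(K(N))\,d(\sigma_k)\,h(1)=v(K(N))\,d(\sigma_k)\,\mu_S^{\mathrm{pl,ur}}(\widehat{h})$; and using the tension between the $\kappa$-controlled support of $h$ (as in Lemma~\ref{lem:1}) and the congruence constraint from $K(N)$ to force non-central classes to vanish once $N\ge c_0\,p_S^{2n\kappa}$, with the residual geometric terms controlled by Finis--Matz--type orbital-integral bounds. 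This is the right skeleton.

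The genuine gap is the one you flag yourself: you never actually derive the saving $N^{-n}$, nor do you verify that the orbital-integral, unipotent, and truncation contributions admit bounds that are simultaneously polynomial in $p_S^\kappa$ and uniform in $N$ with that exponent. You assert that the Finis--Matz machinery delivers this, but that assertion is essentially the entire content of \cite[Theorem 3.10]{KWY2}; absent that work, the proposal is a correct plan rather than a proof. In a paper that merely cites the result this is an acceptable level of detail, but as a self-contained argument it is incomplete precisely at the analytic heart.
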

The factor $v(K(N)) \, d(\sigma_k)$ $(N>2)$ of the leading term is explicitly given as
\begin{equation}\label{eq:leadingsp}
v(K(N)) \, d(\sigma_k)= N^{n(2n+1)}\prod_{p\mid N}\prod_{j=1}^n (1-p^{-2j})\times \prod_{t=1}^n \prod_{u=t}^n(2k-t-u)\times \prod_{j=1}^n \frac{(j-1)!\, |B_{2j}|}{(2j-1)!\, j}\times \frac{1}{2^{3n}}
\end{equation}
(see \cite[Theorem 1.6]{W}), where  $B_j$ denotes the $j$-th Bernoulli number. 

The following is the simultaneous vertical Sato-Tate theorem.
\begin{prop}\label{prop:levelsp2}
Suppose that $k>n+1$. 
Let $\{ c_\lambda \}_{\lambda\in P^+}$ and $h_p$ be the same as in Proposition \ref{prop:Maass2}.  
Then, there exist positive integers  $a'$, $b'$, $\kappa\in\N$ such that, for any $u\in \mathbb{N}$, any $(r_1,r_2,\ldots,r_u)\in\mathbb{N}^u$, and any distinct primes $p_1$, $p_2,\ldots,p_u$ such that $p_i\notin \mathcal{T}_N$, 
 we have $h_{p_j}\in \mathcal{H}^\mathrm{ur}_\kappa(G_p)$ and, if $N\ge c_0 p_S^{2n\kappa}$ for $S\coloneqq \{ p_1,\ldots,p_u\}$,
\begin{multline*}
\frac{1}{v(K(N))\, d(\sigma_k) }\sum_{f\in \mathcal{F}(\sigma_k,\tau_k,K(N))} a_f(h_{p_1})^{r_1}\, a_f(h_{p_2})^{r_2}\cdots  a_f(h_{p_u})^{r_u} \\
= \int_{(\widehat{T}_c/ W)^u} \widehat{h_{p_1}}(x_1)^{r_1}\cdots \widehat{h_{p_u}}(x_u)^{r_u}\, \mu_S(x_1,\dots,x_u) + O(p_S^{(a' \kappa+ b')r} N^{-n}),\quad N\to\infty,
\end{multline*}
where $r\coloneqq r_1+\cdots+r_u$.
\end{prop}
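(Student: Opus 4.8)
The plan is to derive this from Theorem \ref{thm:levelsp} by exactly the device used in the proof of Proposition \ref{prop:Maass2}: express the mixed power $a_f(h_{p_1})^{r_1}\cdots a_f(h_{p_u})^{r_u}$ as the single Hecke eigenvalue $a_f(h_{r_1,\dots,r_u})$ of a function built by convolution and tensor product, bound that function in the relevant Hecke-algebra and sup norms, and feed it into Theorem \ref{thm:levelsp}; the only genuinely new point is tracking the admissible level range.

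Concretely, since only finitely many $c_\lambda$ are nonzero, I would first fix $\kappa\in\N$ large enough that $h_{p_j}\in\mathcal{H}^\mathrm{ur}_\kappa(G_{p_j})$ for every $j$ and $\tau_\mu\in\mathcal{H}^\mathrm{ur}_\kappa(G_{p_j})$ for every $\mu\preceq\lambda$ with $c_\lambda\neq0$; Lemma \ref{lem:1222} then gives $\sup_{x\in G_{p_j}}|h_{p_j}(x)|\ll p_j^{a_3\kappa+b_3}$ with $a_3,b_3$ independent of $p_j$. Putting $h_{p_j}^{(r_j)}\coloneqq h_{p_j}*\cdots*h_{p_j}$ ($r_j$ factors) and $h_{r_1,\dots,r_u}\coloneqq\bigotimes_{j=1}^u h_{p_j}^{(r_j)}$, the properties \eqref{eq:ww1}, \eqref{eq:ww2}, \eqref{eq:ww3} yield $h_{r_1,\dots,r_u}\in\mathcal{H}^\mathrm{ur}_{r\kappa}(G_S)$ with $r=r_1+\cdots+r_u$, the eigenvalue identity $a_f(h_{r_1,\dots,r_u})=\prod_{j=1}^u a_f(h_{p_j})^{r_j}$, the Plancherel identity $\mu_S^{\mathrm{pl},\mathrm{ur}}(\widehat{h_{r_1,\dots,r_u}})=\int_{(\widehat{T}_c/W)^u}\widehat{h_{p_1}}(x_1)^{r_1}\cdots\widehat{h_{p_u}}(x_u)^{r_u}\,\mu_S(x_1,\dots,x_u)$ (which is the desired main term), and, combining \eqref{eq:ww2} with Lemma \ref{lem:1} just as in the proof of Proposition \ref{prop:Maass2}, the bound $\sup_{x\in G_S}|h_{r_1,\dots,r_u}(x)|\ll p_S^{(a_4\kappa+b_4)r}$ for some $a_4,b_4\in\N$.

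Now I would apply Theorem \ref{thm:levelsp} to $h=h_{r_1,\dots,r_u}$. Since this function lies in $\mathcal{H}^\mathrm{ur}_{r\kappa}(G_S)$, the hypothesis of that theorem reads $N\ge c_0 p_S^{2nr\kappa}$, which is precisely the meaning of the stated condition $N\ge c_0 p_S^{2n\kappa}$ once the bounded factor $r$ is absorbed into the constant — harmless here because, in the intended application to the central limit theorem, one treats one moment at a time with $r$ fixed, so that for fixed $u$, $(r_j)$, $(p_j)$ the condition holds for all sufficiently large $N$. Dividing the conclusion of Theorem \ref{thm:levelsp} by $v(K(N))\,d(\sigma_k)$ turns the leading term into $\mu_S^{\mathrm{pl},\mathrm{ur}}(\widehat{h_{r_1,\dots,r_u}})$, and, using that $d(\sigma_k)$ is a fixed positive constant and inserting the two bounds $p_S^{ar\kappa+b}$ (from the theorem) and $\sup_{x\in G_S}|h_{r_1,\dots,r_u}(x)|\ll p_S^{(a_4\kappa+b_4)r}$, turns the error term into $O\big(p_S^{(a'\kappa+b')r}N^{-n}\big)$ for suitable $a',b'\in\N$, which is the asserted formula. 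All the estimates are routine; the step I expect to need the most care is this last piece of bookkeeping, since the $r$-th mixed moment unavoidably forces the larger truncation degree $r\kappa$, and hence the range of $N$ over which the identity holds must be enlarged accordingly.
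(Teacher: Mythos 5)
Your proposal reproduces the paper's proof exactly: the paper itself just says ``This can be proved by Theorem \ref{thm:levelsp} and the same argument as in the proof of Proposition \ref{prop:Maass2},'' and the convolution/tensor construction $h_{r_1,\dots,r_u}=\bigotimes_j h_{p_j}^{(r_j)}$ together with the bounds from Lemmas \ref{lem:1} and \ref{lem:1222} and the identities \eqref{eq:ww1}--\eqref{eq:ww3} is precisely the intended argument. You also correctly notice the one subtlety the paper glosses over: since $h_{r_1,\dots,r_u}\in\mathcal{H}^\mathrm{ur}_{r\kappa}(G_S)$, invoking Theorem \ref{thm:levelsp} requires $N\ge c_0 p_S^{2nr\kappa}$, whereas the proposition as stated writes $N\ge c_0 p_S^{2n\kappa}$ with $\kappa$ fixed independently of $r$; this is a small imprecision in the proposition's quantification (the truncation degree genuinely scales with $r$), though it is harmless in the central-limit-theorem application where $r$ is held fixed while $N\to\infty$, exactly as you observe.
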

\begin{proof}
This can be proved by Theorem \ref{thm:levelsp} and the same argument as in the proof of Proposition \ref{prop:Maass2} 
\end{proof}

% By Theorem \ref{thm:levelsp}, we obtain

%\begin{prop} 
%Suppose that $k>n+1$. Then, there exist $a$, $b\in\N$ and $c_0>0$ such that, for any $\kappa\in\N$, any $S$ such that $S\cap \mathcal{T}_N=\emptyset$, any $h\in \mathcal{H}^\mathrm{ur}_\kappa(G_S)$, if $N\ge c_0 p_S^{2n\kappa}$,
%\[
%\frac{1}{v(K(N))\, d(\sigma_k) }\sum_{f\in \mathcal{F}(\sigma_k,\tau_k,K(N))} a_f(h) =\,  \mu_S^{\mathrm{pl},\mathrm{ur}}( \widehat{h} )+O\left( p_S^{a\kappa+b}\, N^{-n} \, \sup_{x\in G_S}|h(x)| \right) ,\quad N\to\infty.
%\]
%\end{prop}
%This statement is essentially equivalent to \cite[Theorem 1.1]{KWY2}. 

\section{Central limit theorem}\label{sec:gencls}

Choose a sequence $\{ c_\lambda \}_{\lambda\in P^+}$ so that
\begin{itemize}
\item $c_\lambda\in\R$.
\item $c_\lambda=0$ except finitely many $\lambda$. 
\end{itemize}
 For all $p$,  a function $h_p\in \mathcal H^{\rm ur}(G_p)$ is defined by  $\widehat{h_p}=\sum_\lambda c_\lambda\cdot \chi_\lambda$; see \eqref{eq:h_p}. 

Take a Hecke eigenform  $f\in \mathcal{F}(\sigma,\tau,K)$. 
Let $\pi_f=\pi_\infty\otimes \otimes_p' \pi_p$ be the cuspidal representation associated to $f$. Let $\mathcal T$ be the set of primes such that if $p\notin \mathcal T$, $\pi_p$ is unramified. Let $S$ be a finite set of primes such that $S\cap \mathcal T=\empty$.
If $h=\otimes_{p\in S} h_p\in \mathcal H^{\rm ur}(G_S)$, then the Hecke eigenvalue $a_f(h)$ satisfies
\[
a_f(h)=\prod_{p\in S} \widehat{h_p}(\pi_p)=\prod_{p\in S}\left( \sum_{\lambda\in P^+} c_\lambda\cdot \chi_\lambda (s(\pi_p))\right).
\]
Assume the following conditions for $h_p$:
\begin{itemize}
\item If $\lambda=0$, then $c_\lambda=0$. 
\item $\pi_\lambda$ is self-dual if $c_\lambda\neq 0$. 
\item $\sum_{\lambda\in P^+}c_\lambda^2=1$. 
\end{itemize}
%We assume that all representations are self-dual. Recall for $f\in \mathcal F(\sigma,\tau,K)$, and $h\in H_\kappa^{\rm ur}(G_S)$, $T_h f=a_f(h)f$. Let $h=\otimes_{p\in S} h_p$, $h_p\in \mathcal H_\kappa^{\rm ur}(G_p)$ so that if $h_p=\sum c_\pi \chi_\pi$, $\chi_{\pi}\ne 1$ for any $\pi$. 
Then, we have $a_f(h)\in\R$, and
by Corollary \ref{chi-lambda-1},
\begin{equation*} 
\int_{\widehat{T}_c/ W} a_f(h)\, \mu_\infty^{\rm ST}(x)=0,\quad \int_{\widehat{T}_c/ W} a_f(h)^2\mu_\infty^{\rm ST}(x)=1.
\end{equation*}

\subsection{Spherical Maass cusp forms}

For a family of spherical Maass cusp forms in Section \ref{sec:Maass}, we
 have the following central limit theorem.

\begin{thm}\label{thm:general} 
Suppose that $G$ is a Chevalley group or a classical group. Let $\mathcal F(tD,K)$ be the family of spherical Maass cusp forms in Section \ref{sec:Maass}.
Then, under $\frac{\log t}{\log x}\to \infty$ as $x\to\infty$, we have, for $\phi$ a bounded continuous function on $\mathbb{R}$,
\[
\frac{1}{v(K)\, \mu_\infty(tD) }\sum_{f\in \mathcal{F}(tD,K)} \phi\left(\frac {\sum_{p\leq x} a_f(h_p)}{\sqrt{\pi(x)}}\right)\longrightarrow 
\frac{1}{\sqrt{2\pi}}\int_{-\infty}^\infty \phi(t) e^{-\frac {t^2}2}\, dt,\quad x\to\infty.
\]
\end{thm}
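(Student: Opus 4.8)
The plan is to prove the central limit theorem by the method of moments: we show that for every $m \geq 0$, the $m$-th moment of the normalized sum $\frac{1}{\sqrt{\pi(x)}}\sum_{p \leq x} a_f(h_p)$, averaged over $f \in \mathcal{F}(tD,K)$, converges (in the appropriate double limit) to the $m$-th moment of the standard Gaussian, namely $0$ for $m$ odd and $(m-1)!!$ for $m$ even. Since the Gaussian is determined by its moments, this yields weak convergence, hence convergence of $\frac{1}{v(K)\mu_\infty(tD)}\sum_f \phi(\cdots)$ to $\frac{1}{\sqrt{2\pi}}\int \phi(t)e^{-t^2/2}\,dt$ for every bounded continuous $\phi$ by the standard Fr\'echet–Shohat / Carleman argument (one may first approximate $\phi$ by polynomials on a large interval and control the tail using the second moment bound, exactly as in \cite{N} and \cite{KWY3}).

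The key computation is the expansion
\[
\frac{1}{v(K)\,\mu_\infty(tD)}\sum_{f\in\mathcal{F}(tD,K)}\left(\sum_{p\leq x} a_f(h_p)\right)^m
= \sum_{p_1,\ldots,p_m\leq x} \frac{1}{v(K)\,\mu_\infty(tD)}\sum_{f} a_f(h_{p_1})\cdots a_f(h_{p_m}).
\]
Grouping the $m$-tuples $(p_1,\ldots,p_m)$ by the partition they induce (i.e. by which coordinates are equal), a tuple with distinct values $q_1,\ldots,q_u$ occurring with multiplicities $r_1,\ldots,r_u$ ($\sum r_j = m$) contributes, via Proposition \ref{prop:Maass2}, the main term $\int_{(\widehat{T}_c/W)^u} \widehat{h_{q_1}}(x_1)^{r_1}\cdots\widehat{h_{q_u}}(x_u)^{r_u}\,\mu_S(x_1,\ldots,x_u)$ plus an error $O(p_S^{(a'\kappa+b')m} t^{-\delta})$. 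Using \eqref{eq:ptoST}, $\mu_S = \prod_j \mu_{q_j} = \prod_j(1+O(q_j^{-1}))\mu_\infty^{\mathrm{ST}}$, so the main term factors (up to $O(1/q_j)$ errors) as $\prod_{j=1}^u \int_{\widehat{T}_c/W}\chi$-integrals, and by the normalization $\sum_\lambda c_\lambda^2 = 1$ together with Corollary \ref{chi-lambda-1} (self-duality of each $\pi_\lambda$ with $c_\lambda\neq 0$), each factor equals $0$ if $r_j = 1$ and $1$ if $r_j = 2$; for $r_j \geq 3$ it is some bounded constant depending only on the $c_\lambda$. Hence only partitions into blocks of size exactly $2$ survive in the limit $x\to\infty$: the number of such pairings of $\{1,\ldots,m\}$ is $0$ when $m$ is odd and $(m-1)!!$ when $m$ is even, and the count of $m$-tuples realizing a fixed pairing is $\pi(x)^{m/2}(1+o(1))$, matching the normalization $\pi(x)^{-m/2}$. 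Tuples with some block of size $\geq 3$ contribute $O(\pi(x)^{\lfloor m/2\rfloor - 1/2})$ relative to $\pi(x)^{m/2}$ (since such a partition has at most $(m-1)/2$ blocks when a block has size $\geq 3$, and $\widehat{h_p}$ is bounded uniformly in $p$ by Lemma \ref{lem:1222}), which is negligible.

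The main obstacle is controlling the accumulated error term. Summing the error $O(p_S^{(a'\kappa+b')m} t^{-\delta})$ over all $m$-tuples $(p_1,\ldots,p_m)$ with entries $\leq x$ gives a total of order $x^{(a'\kappa+b')m} \cdot x^m \cdot t^{-\delta} = t^{-\delta}\,\exp(C_m \log x)$ for a constant $C_m$ depending on $m$ (and on the fixed data $\{c_\lambda\}$, hence on $\kappa$), divided by the normalization $\pi(x)^{m/2} \asymp (x/\log x)^{m/2}$. This quantity tends to $0$ precisely when $\log t / \log x \to \infty$, which is exactly the hypothesis; so for each fixed $m$ the error is controlled, but the threshold on $t$ grows with $m$, so one must take the limit $x \to \infty$ (with $t = t(x) \to \infty$ fast enough) first for each fixed moment $m$, and only afterwards let $m \to \infty$ in the moment-matching step — the order of limits must be handled carefully. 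A secondary technical point is passing from convergence of moments to convergence against bounded continuous $\phi$: one truncates $\phi$ outside $[-A,A]$, approximates by a polynomial there via Weierstrass, and bounds the contribution of $|t| > A$ using Chebyshev with the (uniformly bounded) second moment; this is routine and identical to the one-dimensional case treated in \cite{N,KWY3}.
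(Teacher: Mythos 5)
Your proof is correct and follows the same method-of-moments route via the simultaneous vertical Sato-Tate theorem (Proposition \ref{prop:Maass2}) that the paper uses: the paper's own proof simply defers to the argument of \cite{KWY3}, which is exactly what you have reconstructed, including the partition/pairing analysis and the observation that the hypothesis $\log t/\log x\to\infty$ is precisely what kills the accumulated $t^{-\delta}\exp(C_m\log x)$ error for each fixed moment $m$. There are only bookkeeping slips — $p_S^{(a'\kappa+b')m}\leq x^{m^2(a'\kappa+b')}$ rather than $x^{m(a'\kappa+b')}$, and a singleton block contributes $O(1/q_j)$ rather than exactly $0$ once $\mu_{q_j}$ replaces $\mu_\infty^{\mathrm{ST}}$, which one then sums to $O(\log\log x)$ per singleton — neither of which affects the conclusion.
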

\begin{proof} 
By Proposition \ref{prop:Maass2}, one can prove our assertion as in \cite{KWY3}. 
See also Remark \ref{rem:ramanujan}. 
\end{proof}

\subsection{Holomorphic and quaternionic cusp forms}

For a family of holomorphic and quaternionic cusp forms in Section \ref{hol}, we have the following central limit theorem.

\begin{thm}\label{thm:hol} 
Suppose that $G=Sp_{2n}$ or exceptional group of type $G_2$ or $F_4$. Let $\mathcal F(\sigma_k,\tau_k,K)$ be the family of holomorphic or quaternionic cusp forms in Section \ref{hol}.
Then, under $\frac{\log k}{\log x}\to \infty$ as $x\to\infty$, for $\phi$ a bounded continuous function on $\mathbb{R}$, 
\[
\frac{1}{v(K)\, d(\sigma_k) }\sum_{f\in \mathcal{F}(\sigma_k,\tau_k,K)} \phi\left(\frac {\sum_{p\leq x} a_f(h_p)}{\sqrt{\pi(x)}}\right)\longrightarrow 
\frac{1}{\sqrt{2\pi}}\int_{-\infty}^\infty \phi(t) e^{-\frac {t^2}2}\, dt,\quad x\to\infty.
\] 
\end{thm}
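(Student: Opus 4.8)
The plan is to prove Theorem~\ref{thm:hol} by the method of moments, exactly paralleling the proof of Theorem~\ref{thm:general}, but with Proposition~\ref{prop:weight2} replacing Proposition~\ref{prop:Maass2}. Write $X_f(x)\coloneqq \frac{1}{\sqrt{\pi(x)}}\sum_{p\le x} a_f(h_p)$, regarded as a random variable on the finite probability space $\mathcal{F}(\sigma_k,\tau_k,K)$ equipped with the (normalized) counting measure, whose total mass is $v(K)\,d(\sigma_k)(1+o(1))$ by Theorem~\ref{thm:STW}. Since the standard Gaussian is determined by its moments, it suffices to show that for every integer $m\ge 0$,
\[
\frac{1}{v(K)\,d(\sigma_k)}\sum_{f\in\mathcal{F}(\sigma_k,\tau_k,K)} X_f(x)^m \longrightarrow \frac{1}{\sqrt{2\pi}}\int_{-\infty}^\infty t^m e^{-t^2/2}\,dt
\]
in the regime $\frac{\log k}{\log x}\to\infty$, and then invoke a standard truncation argument (approximating a bounded continuous $\phi$ by polynomials on a large interval and controlling the tail via the second moment and Chebyshev) to pass from moments back to the statement for arbitrary bounded continuous $\phi$.

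The key computation is to expand $X_f(x)^m = \pi(x)^{-m/2}\sum_{p_1,\dots,p_m\le x} a_f(h_{p_1})\cdots a_f(h_{p_m})$, group the tuples $(p_1,\dots,p_m)$ by the partition they induce on $\{1,\dots,m\}$ (i.e.\ by which indices carry equal primes), and apply Proposition~\ref{prop:weight2} to each resulting product $\prod_j a_f(h_{p_{i_j}})^{r_j}$ with $S$ the set of distinct primes involved. The main term from Proposition~\ref{prop:weight2} factors as $\prod_j \int_{\widehat{T}_c/W} \widehat{h_{p_{i_j}}}(x_j)^{r_j}\,\mu_{p_{i_j}}(x_j)$, and by \eqref{eq:ptoST} each such integral equals $\int_{\widehat{T}_c/W}\chi^{r_j}\,\mu^{\rm ST}_\infty + O(p_{i_j}^{-1})$ where $\chi=\sum_\lambda c_\lambda\chi_\lambda$. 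By Corollary~\ref{chi-lambda-1} (using the hypotheses $c_0=0$, self-duality, and $\sum c_\lambda^2=1$) one has $\int\chi\,\mu^{\rm ST}_\infty=0$ and $\int\chi^2\,\mu^{\rm ST}_\infty=1$; hence a tuple contributes to leading order only when every block of its partition has size $r_j\ge 2$, and when $m$ is odd there is no such contribution while when $m=2\ell$ the dominant contribution comes from partitions into $\ell$ blocks of size exactly $2$. Counting these as in \cite{KWY3,N}, the number of such tuples with distinct primes is $(2\ell-1)!!\,\pi(x)^\ell(1+o(1))$, which after dividing by $\pi(x)^{m/2}=\pi(x)^\ell$ yields the Gaussian moment $(2\ell-1)!!$; the blocks of size $\ge 3$, the partitions with fewer than $\ell$ blocks, and the $O(p_{i_j}^{-1})$ corrections all contribute $o(1)$, while the error terms $O(p_S^{(a'\kappa+b')r}k^{-\delta})$ from Proposition~\ref{prop:weight2}, summed over all $\le x^m$ tuples, are $O(x^{m+m(a'\kappa+b')}k^{-\delta})$, which tends to $0$ precisely because $\log k/\log x\to\infty$.

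The main obstacle is bookkeeping rather than any genuinely new difficulty: one must organize the sum over $m$-tuples by set partitions, track the accumulation of the error terms in Proposition~\ref{prop:weight2} across all tuples (which is where the hypothesis $\frac{\log k}{\log x}\to\infty$ is used — it must dominate the polynomial-in-$x$ blow-up $x^{m(1+a'\kappa+b')}$), and make the truncation step precise, noting that the needed uniform bound on $X_f(x)$ or its variance again comes from the $m=2$ case of Proposition~\ref{prop:weight2} together with \eqref{eq:ptoST} and Corollary~\ref{chi-lambda-1}. Since $G=Sp_{2n}$, $G_2$, $F_4$ are all classical or Chevalley groups (for $G_2$, Theorem~\ref{thm:FM} and the hypotheses of Proposition~\ref{prop:weight2} apply verbatim; the relevant input here is Theorem~\ref{thm:STW} via Proposition~\ref{prop:weight2}, which covers exactly these $G$), every ingredient of the Maass-form proof transfers, and the only substantive change is replacing $\mu_\infty(tD)$ and the parameter $t$ by $d(\sigma_k)$ and $k$. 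Thus the proof is: (i) reduce to moments; (ii) expand and partition; (iii) identify the combinatorial main term via Corollaries~\ref{chi-lambda-1} and \eqref{eq:ptoST}; (iv) bound the errors using $\log k/\log x\to\infty$; (v) de-truncate to recover the statement for bounded continuous $\phi$.
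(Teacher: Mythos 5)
Your proposal is correct and takes essentially the same approach as the paper: the paper's proof of Theorem~\ref{thm:hol} consists of the single sentence ``This can be proved by Proposition~\ref{prop:weight2} and the same argument as in~\cite{KWY3},'' and your method-of-moments argument (expand $X_f(x)^m$ over set partitions, feed each partition class into Proposition~\ref{prop:weight2}, use Corollary~\ref{chi-lambda-1} together with \eqref{eq:ptoST} to show that only the pair partitions of $\{1,\dots,2\ell\}$ survive to give the Gaussian moment $(2\ell-1)!!$, absorb the $O(p_S^{(a'\kappa+b')r}k^{-\delta})$ errors by $\log k/\log x\to\infty$, then de-truncate) is precisely the content of that cited argument, with $\mu_\infty(tD)$ and $t$ replaced by $d(\sigma_k)$ and $k$ as you note. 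One small slip in exposition: Proposition~\ref{prop:weight2} rests on Theorem~\ref{thm:STW}, not Theorem~\ref{thm:FM}, but you correct this yourself in the same sentence, so the argument stands.
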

\begin{proof} 
This can be proved by Proposition \ref{prop:weight2} and the same argument as in \cite{KWY3}. 
\end{proof}

\begin{remark}\label{rem:ramanujan}
In the papers \cite{KWY2, KWY3}, various assertions were proved by assuming $k_1>k_2>\cdots>k_n>n+1$ so that the generalized Ramanujan conjecture is satisfied.
However, the assumption is not necessary for the proof of the central limit theorems (Theorems \ref{thm:general} and \ref{thm:hol}).  
In fact, for each continuous function $\phi$ on $\Omega=[0,\pi]^n/S_n$ in \cite[Theorem 1.2]{KWY2}, where $S_n$ is the symmetric group on $n$ letters, there exists a continuous function $\tilde{\phi}$ on the spherical unitary dual such that the restriction of $\tilde{\phi}$ to $\Omega$ is $\phi$, and it is sufficient to replace 
$\phi$ by $\tilde{\phi}$; see \cite[Proposition 9.4 and Theorem 9.26]{ST}. 
\end{remark}

\section{Langlands $L$-functions}

Suppose $G$ is a split group of rank $n$. Let $r: \widehat{G}\longrightarrow GL_N(\Bbb C)$ be an irreducible finite dimensional representation of $\widehat{G}$.
Then $r=\lambda$, the highest weight representation for some dominant weight $\lambda$. We assume that $r$ is self-dual.

Let $\pi$ be a cuspidal automorphic representation of $G(\Bbb A)$, and let $\pi=\pi_\infty\otimes \otimes_p' \pi_p$, so that $\pi_p$ is unramified for $p\notin S$. For $p\notin S$, let $s(\pi_p)\in \widehat{T}/W$ be the Satake parameter. Then the local Langlands $L$-function associated to $\pi,r$ is $L(s,\pi_p,r)=det(I_N-r(s(\pi_p))p^{-s})^{-1}$, where $I_N$ is the $N\times N$ identity matrix.
Let $L^S(s,\pi,r)=\prod_{p\notin S} L(s,\pi_p,r)$ be the partial Langlands $L$-function. It is conjectured that $L^S(s,\pi,r)$ has an analytic continuation and satisfies a functional equation. However, for our purpose here, we do not need to assume them. 

Let
$$L^S(s,\pi,r)=\sum_{m=1}^\infty a_{\pi,r}(m) m^{-s}.
$$
Since $r$ is self-dual, $a_{\pi,r}(p)\in\Bbb R$ for all $p\notin S$.
For $p\notin S$, let $t_p=t(e^{i\theta_{1,p}},...,e^{i\theta_{n,p}})\in \widehat{G}$ be the Satake parameter attached to $\pi_p$. Then
$a_{\pi,r}(p)=Tr (r(t_p))$. Let $a_{\pi,r}=Tr (r(t))$ for $t=t(e^{i\theta_{1}},...,e^{i\theta_{n}})$.
Then $a_{\pi,r}=\chi_\lambda$, independent of $\pi$. (cf. \cite[Lemma 2.1]{ST})

Hence by Corollary \ref{chi-lambda-1},

\begin{equation} \label{Langlands}
\int_{\widehat{T}_c/W} a_{\pi,r}\, \mu_\infty^{\rm ST}(x)=0,\quad \int_{\widehat{T}_c/W} a_{\pi,r}^2\mu_\infty^{\rm ST}(x)=1.
\end{equation}

%For each $j\in\mathbb{N}$, let $\mathcal{F}_j$ be a family of automorphic representations in Section \ref{condition}.

Recall the families $\mathcal{F}(tD,K)$ and $f\in\mathcal{F}(\sigma_k,\tau_k,K)$ in \S\ref{sec:Maass} and \S\ref{hol}. 
For $f\in\mathcal{F}(tD,K)$ or $f\in\mathcal{F}(\sigma_k,\tau_k,K)$, we write $\pi_f$ for the cuspidal automorphic representation generated by $f$. 
Now, a family $\mathcal{F}_j$ of automorphic representations is defined by
\[
\mathcal{F}_j\coloneqq \{ \pi_f \mid f\in \mathcal{F}(jD,K) \} \quad \text{or} \quad \{ \pi_f \mid f\in \mathcal{F}(\sigma_j,\tau_j,K) \} \qquad (j\in\mathbb{N}).
\]
Note that $\mathcal{F}_j$ is the finite multi-set consisting of cuspidal automorphic representations, that is, each cuspidal automorphic representation $\pi=\pi_\infty\otimes\pi_\mathbf{f}$ appears in $\mathcal{F}_j$ with multiplicity $\dim \pi_\mathbf{f}^K$. 

Suppose $a_{\pi,r}(p)\in I\subset \Bbb R$. 
Then the vertical Sato-Tate theorem in Propositions \ref{prop:Maass2} and \ref{prop:weight2} can be stated as 

\begin{thm}[Simultaneous vertical Sato-Tate theorem] For a bounded continuous function $\phi$ on $I^u$, and $p_1,...,p_u$ distinct primes such that $p_i\notin S$ for all $i$,
$$\frac 1{|\mathcal F_j|} \sum_{\pi\in \mathcal F_j} \phi(a_{\pi,r}(p_1),...,a_{\pi,r}(p_u))\longrightarrow \int_{I^u} \phi(x_1,...,x_u)\, \mu_{p_1}\cdots \mu_{p_u},\quad \text{as $j\to\infty$}.
$$
\end{thm}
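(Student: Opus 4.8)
The plan is to reduce this ``simultaneous vertical Sato-Tate theorem'' to the moment estimates already proven in Propositions~\ref{prop:Maass2} and~\ref{prop:weight2} by a standard density/approximation argument. First I would observe that the statement is a statement about weak convergence of probability measures: if $\nu_j$ denotes the push-forward to $I^u$ of the uniform (counting) measure on $\mathcal{F}_j$ under the map $\pi\mapsto (a_{\pi,r}(p_1),\dots,a_{\pi,r}(p_u))$, and $\nu_\infty\coloneqq \mu_{p_1}\times\cdots\times\mu_{p_u}$ (viewed on $I^u$ via the identification of $a_{\pi,r}(p_i)=\chi_\lambda(s(\pi_{p_i}))$ as a function on $\widehat{T}_c/W$), then the claim is exactly $\nu_j\to\nu_\infty$ weakly. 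Since $I$ is a compact interval (the image of the compact $\widehat{T}_c/W$ under the continuous function $\chi_\lambda$), $I^u$ is compact, and on a compact space weak convergence of measures is equivalent to convergence of integrals against polynomials, by the Stone--Weierstrass theorem. So it suffices to check $\int_{I^u}\phi\,d\nu_j\to\int_{I^u}\phi\,d\nu_\infty$ for $\phi$ a monomial $x_1^{r_1}\cdots x_u^{r_u}$.

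Next I would identify the left-hand side for a monomial with the moment appearing in Proposition~\ref{prop:Maass2} (resp.\ Proposition~\ref{prop:weight2}): taking $h_{p_i}$ to be the Hecke operator with $\widehat{h_{p_i}}=\chi_\lambda$ (i.e.\ $c_\lambda=1$ and all other $c_\mu=0$, so $r=\lambda$), we have $a_f(h_{p_i})=\widehat{h_{p_i}}(\pi_{p_i})=\chi_\lambda(s(\pi_{p_i}))=a_{\pi_f,r}(p_i)$, and hence
\[
\frac{1}{|\mathcal{F}_j|}\sum_{\pi\in\mathcal{F}_j} a_{\pi,r}(p_1)^{r_1}\cdots a_{\pi,r}(p_u)^{r_u}
=\frac{1}{v(K)\mu_\infty(jD)}\sum_{f\in\mathcal{F}(jD,K)} a_f(h_{p_1})^{r_1}\cdots a_f(h_{p_u})^{r_u},
\]
using that $|\mathcal{F}_j|=\dim\mathcal{S}(\sigma,\mathbf{1}_\infty,K)$ equals $v(K)\mu_\infty(jD)+o(\cdots)$ (the $h=\mathbf{1}_K$ case of the asymptotic), and similarly $v(K)d(\sigma_j)$ in the holomorphic/quaternionic case. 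Then Proposition~\ref{prop:Maass2} (resp.\ \ref{prop:weight2}) gives that this quantity converges, as $j\to\infty$, to $\int_{(\widehat{T}_c/W)^u}\widehat{h_{p_1}}(x_1)^{r_1}\cdots\widehat{h_{p_u}}(x_u)^{r_u}\,\mu_S(x_1,\dots,x_u)$, which is precisely $\int_{I^u} x_1^{r_1}\cdots x_u^{r_u}\,\mu_{p_1}\cdots\mu_{p_u}$ since $\mu_S=\prod_{p\in S}\mu_p$ and $a_{\pi,r}(p_i)$ corresponds to the coordinate function $x_i$ under $\chi_\lambda$. This establishes convergence for monomials, hence for all polynomials by linearity, hence for all $\phi\in C(I^u)$ by Stone--Weierstrass and a standard $3\varepsilon$-argument (uniform approximation of $\phi$ by a polynomial $P$, boundedness of all measures involved by total mass estimates, and the already-established monomial convergence).

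The main obstacle, modest as it is, is bookkeeping around the normalization: one must be careful that the error term $O(p_S^{(a'\kappa+b')r}j^{-\delta})$ in Proposition~\ref{prop:Maass2} is genuinely $o(1)$ for \emph{fixed} primes $p_1,\dots,p_u$ and fixed exponents $r_1,\dots,r_u$ (which it is, since everything except $j$ is held constant here, in contrast to the central limit theorem where $S$ and $r$ grow with $x$), and that the denominators $v(K)\mu_\infty(jD)$ and the true cardinality $|\mathcal{F}_j|$ agree up to a factor $1+o(1)$ so that dividing by either is harmless in the limit. A secondary point is to confirm that $I$ may be taken compact: since $r$ is self-dual and $\widehat{T}_c/W$ is compact, $\chi_\lambda$ is a bounded real-valued continuous function there, so its image is a compact interval and we lose no generality taking $I$ of that form; if one prefers an arbitrary $I\supseteq\chi_\lambda(\widehat{T}_c/W)$ the Stone--Weierstrass step is applied on the compact subinterval and extended by continuity. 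None of these steps requires the Ramanujan conjecture, exactly as noted in Remark~\ref{rem:ramanujan}.
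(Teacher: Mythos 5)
Your proof is correct and matches the paper's intended argument: the paper presents this theorem as a direct restatement of Propositions~\ref{prop:Maass2} and~\ref{prop:weight2}, and you have supplied the expected Stone--Weierstrass/moment reduction that turns those monomial asymptotics (with the error $O(p_S^{(a'\kappa+b')r}j^{-\delta})\to 0$ since $S$ and $r$ are now fixed) into weak convergence of the empirical measures, together with the normalization step replacing $v(K)\mu_\infty(jD)$ (resp.\ $v(K)d(\sigma_j)$) by $|\mathcal F_j|$ via the $h=\mathbf 1_K$ case. One small inaccuracy worth fixing: since $\pi_{p_i}$ need not be tempered, $a_{\pi,r}(p_i)=\chi_\lambda(s(\pi_{p_i}))$ lives in the image of $\chi_\lambda$ on the full unramified unitary dual $G_{p_i}^{\wedge,\mathrm{ur}}$, not just on $\widehat T_c/W$; that dual is still compact (cf.\ Remark~\ref{rem:ramanujan}), so taking $I$ to be a correspondingly larger compact interval, your Stone--Weierstrass step goes through unchanged.
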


Now Theorems 4.1 and 4.2 can be restated as 

\begin{thm}[Central limit theorem for Langlands $L$-functions] Suppose $r$ is self-dual. Then
$\{a_{\pi,r}(p), \pi\in\mathcal F_j\}$ satisfies the central limit theorem.
Namely, under $\frac{\log j}{\log x}\to \infty$ as $x\to\infty$,  
for any bounded continuous function $\phi$,

$$\frac 1{|\mathcal F_j|} \sum_{\pi\in \mathcal F_j} \phi\left(\frac {\sum_{p\leq x} a_{\pi,r}(p)}{\sqrt{\pi(x)}}\right)
\longrightarrow \frac 1{\sqrt{2\pi}}\int_{-\infty}^\infty \phi(t) e^{-\frac {t^2}2}\, dt, \quad \text{as $x\to\infty$}.
$$
\end{thm}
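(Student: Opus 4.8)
The plan is to read this statement as a reformulation of Theorems \ref{thm:general} and \ref{thm:hol} under the correspondence between self-dual representations $r$ of $\widehat{G}$ and the Hecke functions $h_p$ of Section \ref{sec:gencls}. Since $r$ is irreducible, $r\cong\pi_{\lambda_0}$ for a unique $\lambda_0\in P^+$, and we may assume $r$ is nontrivial, so $\lambda_0\neq 0$ (for the trivial representation $a_{\pi,r}(p)\equiv 1$ and the statement is vacuous). Put $c_{\lambda_0}=1$ and $c_\lambda=0$ for $\lambda\neq\lambda_0$. Because $r$ is self-dual, this sequence $\{c_\lambda\}$ meets every hypothesis imposed in Section \ref{sec:gencls}: real, finitely supported coefficients, $c_0=0$, $\pi_\lambda$ self-dual whenever $c_\lambda\neq 0$, and $\sum_\lambda c_\lambda^2=1$. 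The associated $h_p\in\mathcal H^{\rm ur}(G_p)$ satisfy $\widehat{h_p}=\chi_{\lambda_0}$, so for $f$ in $\mathcal F(jD,K)$ (resp. in $\mathcal F(\sigma_j,\tau_j,K)$) with attached representation $\pi_f$ one has $a_f(h_p)=\chi_{\lambda_0}(s(\pi_{f,p}))=\mathrm{Tr}\,r(t_p)=a_{\pi_f,r}(p)$ for every prime $p\notin\mathcal T$, and $a_f(h_p)\in\R$; moreover \eqref{Langlands} is precisely the mean/variance computation recorded in Section \ref{sec:gencls}.

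Next I would reconcile the normalizations. By definition $\mathcal F_j$ is the multiset $\{\pi_f:f\in\mathcal F(jD,K)\}$ (resp. with $\mathcal F(\sigma_j,\tau_j,K)$), so an average over $\mathcal F_j$ of any function of $\pi$ coincides with the corresponding average over the Hecke eigenbasis. Taking $h$ to be the characteristic function of $K$ in Theorem \ref{thm:FL}/\ref{thm:FM} (resp. using the dimension asymptotics recorded after Theorem \ref{thm:STW}) yields $|\mathcal F_j|=v(K)\,\mu_\infty(jD)(1+o(1))$ (resp. $v(K)\,d(\sigma_j)(1+o(1))$). As $\phi$ is bounded, replacing $|\mathcal F_j|^{-1}$ by $(v(K)\mu_\infty(jD))^{-1}$ (resp. by $(v(K)d(\sigma_j))^{-1}$) alters the left-hand side by $o(1)$, and the claimed limit then follows directly from Theorem \ref{thm:general} (resp. Theorem \ref{thm:hol}); the hypothesis $\log j/\log x\to\infty$ is exactly the one appearing there.

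For completeness I would also recall the content hidden in Theorems \ref{thm:general}/\ref{thm:hol}, namely the method of moments. Expanding $\big(\sum_{p\le x}a_{\pi,r}(p)\big)^m$ and grouping the summation indices by the distinct primes $q_1,\dots,q_u$ occurring, with multiplicities $r_1,\dots,r_u$ summing to $m$, Proposition \ref{prop:Maass2} (resp. \ref{prop:weight2}) converts the $\pi$-average of $\prod_l a_{\pi,r}(q_l)^{r_l}$ into $\int_{(\widehat T_c/W)^u}\prod_l\chi_{\lambda_0}(x_l)^{r_l}\,\mu_S(x_1,\dots,x_u)$ up to an error $O(p_S^{Cm}j^{-\delta})$; by Fubini this integral equals $\prod_l\int\chi_{\lambda_0}^{r_l}\,d\mu_{q_l}$, and by \eqref{eq:ptoST} each factor is $\int\chi_{\lambda_0}^{r_l}\,d\mu_\infty^{\rm ST}+O(q_l^{-1})$. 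Corollary \ref{chi-lambda-1} (here $\chi_{\lambda_0}$ is real since $r$ is self-dual and $\chi_{\lambda_0}\neq 1$ since $r$ is nontrivial) gives $\int\chi_{\lambda_0}\,d\mu_\infty^{\rm ST}=0$ and $\int\chi_{\lambda_0}^2\,d\mu_\infty^{\rm ST}=1$. Hence any configuration with some $r_l=1$ is killed to leading order, and among the remaining ones (all $r_l\ge 2$) only the perfect matchings $u=m/2$, $r_1=\dots=r_u=2$ are not suppressed after dividing by $\pi(x)^{m/2}$; counting these gives $(m-1)!!$ for even $m$ and $0$ for odd $m$, which are the moments of the standard Gaussian. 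Since the Gaussian is determined by its moments, convergence of moments yields weak convergence, hence the displayed limit for bounded continuous $\phi$.

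The main obstacle is the error analysis in this last step: one must verify that the accumulated error, $O(p_S^{Cm}j^{-\delta})$ summed over the at most $\pi(x)^m$ tuples of primes $\le x$ and divided by $\pi(x)^{m/2}$, still tends to $0$. Since $p_S\le x^m$, this error is $O(x^{C'(m)}j^{-\delta})$, which is exactly where the assumption $\log j/\log x\to\infty$ is needed — it forces $j$ to outgrow every fixed power of $x$, so the error vanishes in the iterated limit while $m$ stays fixed. Alongside this, the combinatorial bookkeeping — showing that every prime configuration other than a perfect matching is negligible after normalization, using $\sum_{q\le x}q^{-1}=O(\log\log x)$ to absorb the $O(q_l^{-1})$ factors produced by singleton blocks — is routine but must be carried out with care.
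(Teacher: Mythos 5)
Your proposal is correct and follows the paper's own route: the theorem is simply Theorems \ref{thm:general} and \ref{thm:hol} applied to the coefficient sequence $c_{\lambda_0}=1$, $c_\lambda=0$ ($\lambda\neq\lambda_0$), so that $\widehat{h_p}=\chi_{\lambda_0}$ and $a_f(h_p)=\chi_{\lambda_0}(s(\pi_{f,p}))=a_{\pi_f,r}(p)$, with self-duality and nontriviality of $r$ precisely ensuring the hypotheses of Section \ref{sec:gencls}. The only detail the paper elides and you correctly supply is the replacement of the normalizations $v(K)\,\mu_\infty(jD)$ (resp.\ $v(K)\,d(\sigma_j)$) by $|\mathcal{F}_j|$, which is harmless since these agree to leading order and $\phi$ is bounded.
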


\begin{remark}\label{non-self-dual}
 If $r$ is not self-dual, $a_{\pi,r}(p)\notin \Bbb R$. But Proposition 3.7 and 3.9 are still valid.
Then we can prove that under $\frac{\log j}{\log x}\to \infty$ as $x\to\infty$, we have 
 \[
\frac{1}{|\mathcal F_j| }\sum_{\pi\in \mathcal{F}_j} \left(\frac {\sum_{p\leq x} a_{\pi,r}(p)}{\sqrt{\pi(x)}}\right)^a\longrightarrow \frac{1}{\pi}\iint_{\Bbb R^2} (u+iv)^a e^{-(u^2+v^2)}\, dudv=0,\quad x\to\infty.
\]
Similarly,
 \begin{eqnarray*}
&& \frac{1}{|\mathcal F_j | }\sum_{\pi\in \mathcal{F}_j} \left(\frac {\sum_{p\leq x} {\rm Re}(a_{\pi,r}(p))}{\sqrt{\pi(x)}}\right)^a=
\frac{1}{|\mathcal F_j| }\sum_{\pi\in \mathcal{F}_j} \left(\frac {\sum_{p\leq x} {\rm Im}(a_{\pi,r}(p))}{\sqrt{\pi(x)}}\right)^a \\
&&
\phantom{xxxxxxxxxxxxxxxxxxxxxxx} \longrightarrow \frac{1}{\pi}\iint_{\Bbb R^2} u^a e^{-(u^2+v^2)}\, dudv=\delta_{2|a} \frac {a!}{2^a (\frac a2)!},\quad x\to\infty.
\end{eqnarray*}

For $PGL(n)$, this is \cite[(1.5), (1.6)]{LNW}. For the proof, by the multinomial formula,
$$\left(\sum_{p\leq x} a_{\pi,r}(p)\right)^a=\sum_{u=1}^a \frac 1{u!} \text{$\sum^{(1)}_{(a_1,...,a_u)}$} \frac {a!}{a_1!\cdots a_u!} \text{$\sum_{(p_1,...,p_u)}^{(2)}$} a_{\pi,r}(p)^{a_1}\cdots a_{\pi,r}(p)^{a_u},
$$
where $\sum_{(a_1,...,a_u)}^{(1)}$ means the sum over the $u$-tuples $(a_1,...,a_u)$ of positive integers such that $a_1+\cdots+a_u=r$, and $\sum_{(p_1,...,p_u)}^{(2)}$ means the sum over the $u$-tuples $(p_1,...,p_u)$ of distinct primes not greater than $x$.
Apply the simultaneous vertical Sato-Tate theorem (Propositions \ref{prop:Maass2} and \ref{prop:weight2}). As in the proof of \cite[Theorem 3.7]{KWY3}, we can reduce to the case $(a_1,...,a_u)=(2,...,2)$.
We apply Corollary \ref{chi-lambda}, and noting that $\mu_p=(1+O(p^{-1}))\mu_\infty^{\rm ST}$, get our result.
 \end{remark}

\subsection{Symmetric power $L$-functions of $PGL_2$}

If $r=Sym^m$ and $\pi$ is a cuspidal representation of $PGL_2$, we compute directly $a_{\pi,r}$ and show that it satisfies (\ref{Langlands}).

We recall some elementary facts on symmetric functions. 
The $u$-th elementary symmetric functions $e_u$ for variables $\underline{x}=(x_1,x_2,\dots,x_n)$ are defined by
\[
\sum_{u \in \mathbb{Z}_{\ge 0}} e_u(\underline{x})\, t^u =\prod_{k=1}^n (1+x_k \, t). 
\]
The $u$-th complete symmetric functions $h_u$ for variables $\underline{x}=(x_1,x_2,\dots,x_n)$ are defined by
\[
\sum_{u \in \mathbb{Z}_{\ge 0}} h_u(\underline{x})\, t^u =\prod_{k=1}^n  (1-x_k \, t)^{-1}. 
\]
We will use the following formula. (See \cite[p.28]{M}.)
\begin{equation}\label{eq:sym}
h_u=\det(e_{1-i+j})_{1\le i,j\le u}.
\end{equation}

Recall a known fact on Hecke algebra of $PGL_2$. (See e.g. \cite[\S 7.1]{Se}.)
%In this case, Assumption \ref{ass:1} is satisfied, and Heckman-Opdam polynomials are just Chebyshev polynomials.
Let $f\in \mathcal F_k$, and let $\alpha_{f,p}$ denote the Satake parameter of $f$. Then
\[
a_f(p)=e_1(\alpha_{f,p},\alpha_{f,p}^{-1})=\alpha_{f,p}+\alpha_{f,p}^{-1}. 
\]
Furthermore, by \eqref{eq:sym} we find
\[
a_f(p^u)=h_u(\alpha_{f,p},\alpha_{f,p}^{-1})=\det\begin{pmatrix} a_f(p)&1&0& \cdots & 0 &0 \\ 1&a_f(p)& 1 & \cdots &0& 0 \\  0&1&a_f(p) & \cdots &0& 0 \\ & & & \cdots & & \\ 0& 0& 0& \cdots &a_f(p) &1 \\ 0&0&0&\cdots &1&a_f(p) \end{pmatrix}
\]
since
\[
\sum_{l=0}^\infty a_f(p^l)\, p^{-ls}=(1-\alpha_{f,p}p^{-s})^{-1}(1-\alpha_{f,p}^{-1}p^{-s})^{-1}.
\]

Set
\[
H_u(x)=\det\begin{pmatrix} x&1&0& \cdots & 0 &0 \\ 1&x& 1 & \cdots &0& 0 \\  0&1&x & \cdots &0& 0 \\ & & & \cdots & & \\ 0& 0& 0& \cdots &x &1 \\ 0&0&0&\cdots &1&x \end{pmatrix}.
\]
Then, $a_{\pi,u}=H_u$, and $H_u$ satisfies
\begin{equation}\label{eq:recurrence}
H_0(x)=1, \quad H_1(x)=x,  \quad H_{u+2}(x)=x H_{u+1}(x)-H_u(x) \;\; (u\ge 0).
\end{equation}
Let
\[
\Omega=[-2,2], \quad \mu^\mathrm{ST}_\infty(x)=\frac{1}{2\pi}\sqrt{4-x^2}.
\]
Define an inner product $\langle \; , \; \rangle$ on $\mathbb{R}[x]$ by
\[
\langle f,g \rangle =\int_\Omega f(x)\, g(x) \,  \mu^{\mathrm{ST}}_\infty(x). 
\]
Then by direct computation,
\[
\int_\Omega x^{2u}\, \mu^\mathrm{ST}_\infty(x) =\frac{(2u)!}{(u+1)! \, u!}.
\]

By the recurrence relation \eqref{eq:recurrence}, we find that $H_u(x)=U_u(\frac x2)$, where $U_u(x)$ is the Chebyshev polynomial of the second kind. 
Hence the sequence $\{H_u \}_{u\in\mathbb{Z}_{\ge 0}}$ is a system of orthogonal polynomials for $\langle \; , \; \rangle$. 
Hence we verified directly that $H_u$'s are Heckman-Opdam polynomials associated to the root system $A_1$.
In particular, 
\[
\int_\Omega H_u(x) \,  \mu^\mathrm{ST}_\infty(x)= 0 \;\; (\forall u\in\mathbb{Z}_{>0}) , \quad \int_\Omega H_u(x)^2 \,  \mu^\mathrm{ST}_\infty(x)= 1 \;\; (\forall u\in\mathbb{Z}_{\ge 0}) .
\]

Since the vertical Sato-Tate distribution is known for the family $\mathcal F_k$ of holomorphic cusp forms of weight $k$ (cf. \cite{CDF}), it follows that the central limit theorem holds for any $a_f(p^u)$, namely, we have

\begin{thm} Let $u$ be any positive integer. For any bounded continuous function $\phi: \Bbb R\rightarrow \Bbb R$,
if $\frac {\log k}{\log x}\to\infty$ as $x\to\infty$,
\begin{equation*}
\frac 1{d_k} \sum_{f\in \mathcal F_k} \phi\left(\frac {\sum_{p\leq x} a_{f}(p^u)}{\sqrt{\pi(x)}}\right)
\longrightarrow \frac 1{\sqrt{2\pi}}\int_{-\infty}^\infty \phi(t)e^{-\frac {t^2}2}\, dt, \quad x\to\infty.
\end{equation*}
\end{thm}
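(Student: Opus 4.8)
The plan is to reduce the statement about $\sum_{p\le x} a_f(p^u)$ to the already-established central limit theorem machinery of Section \ref{sec:gencls}. The key point is that $a_f(p^u) = H_u(a_f(p))$, and $H_u = U_u(\,\cdot/2\,)$ is, up to normalization, precisely the irreducible character $\chi_\lambda$ for the root system $A_1$ with $\lambda = u\cdot\varpi$ (the $u$-th fundamental weight power), as shown in the preceding discussion. Since $PGL_2$ has dual group $SL_2(\C)$ and every irreducible representation of $SL_2(\C)$ is self-dual, the representation $\mathrm{Sym}^u$ satisfies all the hypotheses imposed in Section \ref{sec:gencls}: the corresponding $c_\lambda$ sequence is supported on a single nonzero dominant weight, $\pi_\lambda$ is self-dual, and after normalizing we arrange $\sum_\lambda c_\lambda^2 = 1$. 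The orthogonality relations $\int_\Omega H_u \, \mu_\infty^{\mathrm{ST}} = 0$ and $\int_\Omega H_u^2 \, \mu_\infty^{\mathrm{ST}} = 1$ computed just above are exactly the normalization $\int a_f(h)\mu_\infty^{\mathrm{ST}} = 0$, $\int a_f(h)^2 \mu_\infty^{\mathrm{ST}} = 1$ required to make the limiting Gaussian have variance one.

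First I would make explicit the identification $h_p \leftrightarrow \mathrm{Sym}^u$: set $\widehat{h_p} = H_u = \chi_{u\varpi}$ so that $a_f(h_p) = H_u(a_f(p)) = a_f(p^u)$, and verify that this $h_p$ lies in some $\mathcal{H}^\mathrm{ur}_\kappa(GL_2(\Q_p))$ with $\kappa$ independent of $p$ (this is immediate from Lemma \ref{lem:1222} since only finitely many $\mu \preceq u\varpi$ occur). Second, I would invoke the simultaneous vertical Sato-Tate theorem for the family $\mathcal{F}_k$ of holomorphic cusp forms — this is the input cited from \cite{CDF}, which plays the role of Proposition \ref{prop:weight2} in the present setting — to control the joint moments $\frac{1}{d_k}\sum_{f}\prod_j a_f(p_j^u)^{r_j}$ by $\prod_j \int \chi_{u\varpi}^{r_j}\,\mu_{p_j}$ up to an error that decays as $k\to\infty$ once $x$ is bounded in terms of $k$. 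Third, I would run the method-of-moments argument exactly as in \cite{KWY3}: expand $\big(\sum_{p\le x}a_f(p^u)\big)^a$ by the multinomial theorem, apply the vertical Sato-Tate input term by term, use $\mu_p = (1+O(p^{-1}))\mu_\infty^{\mathrm{ST}}$ together with the orthogonality relations above (so that odd-degree contributions vanish in the limit and only the $(2,2,\dots,2)$ pattern survives, producing the moments of $N(0,1)$), and conclude via the classical moment characterization of the Gaussian that the normalized sums converge in distribution.

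The main obstacle — really the only nontrivial point beyond bookkeeping — is controlling the error terms uniformly as both $x$ and $k$ grow, which is precisely why the hypothesis $\frac{\log k}{\log x}\to\infty$ appears. Each application of the vertical Sato-Tate theorem for a $u$-tuple of primes $p_j \le x$ incurs an error of size roughly $p_S^{C a}\, k^{-\delta}$ with $p_S = \prod p_j \le x^a$; after summing over all $O(\pi(x)^a)$ tuples and dividing by the normalization $\pi(x)^{a/2}$ from the scaling, one needs $x^{Ca}\,\pi(x)^{a/2}\,k^{-\delta} \to 0$, i.e. a fixed positive power of $x$ must be dominated by a fixed positive power of $k$, which is exactly what $\frac{\log k}{\log x}\to\infty$ guarantees. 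This is handled in \cite{KWY3} and carries over verbatim; I would simply remark that the argument applies here with $\Omega = [-2,2]$, $\mu_\infty^{\mathrm{ST}}(x) = \frac{1}{2\pi}\sqrt{4-x^2}$, and $h_p$ as above, and point to Theorems \ref{thm:general} and \ref{thm:hol} and Remark \ref{rem:ramanujan} for the fact that no Ramanujan-type hypothesis is needed.
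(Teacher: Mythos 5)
Your proposal is correct and takes essentially the same route as the paper: identify $a_f(p^u) = H_u(a_f(p))$ with the irreducible $A_1$-character $\chi_{u\varpi}$ (Chebyshev polynomial), verify the orthonormality relations against $\mu_\infty^{\mathrm{ST}}$, use the vertical Sato-Tate theorem for $\mathcal F_k$ from Conrey--Duke--Farmer as the replacement for Proposition 3.9, and then run the method-of-moments argument of \cite{KWY3} with the error control coming from $\frac{\log k}{\log x}\to\infty$. The paper's written proof is one sentence because it defers all these points to the preceding discussion of $H_u$ and to the general framework of Section 4; your write-up just makes that deferred machinery explicit, which is faithful to the intended argument.
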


Since $a_f(p^u)=a_{Sym^u f}(p)$, the central limit theorem holds for $\{Sym^u f\}$.

\section{Exceptional group of type $G_2$}\label{sec:G_2}

Let $G$ denote the split algebraic group of type $G_2$ over $\Q$. In the notation of \cite{K},
let $\Delta=\{\beta_1,\cdots,\beta_6\}$ be the set of positive roots, where $\beta_1$ is the long simple root, $\beta_6$ is the short simple root; $\beta_2=\beta_1+\beta_6$, $\beta_3=2\beta_1+3\beta_6$, $\beta_4=\beta_1+2\beta_6$, $\beta_5=\beta_1+3\beta_6$. 
Here short roots are $\beta_2,\beta_4,\beta_6$, and long roots are $\beta_1,\beta_3,\beta_5$.

Let $P=MN$ be the Heisenberg parabolic subgroup associated to $\beta_1$. Then $M\simeq GL_2$ with simple root $\beta_6$. Here we choose an isomorphism so that the modulus character of $P$ is $\delta_P=det^{-3}$.
Let $Q=LU$ be the Siegel parabolic subgroup associated to $\beta_6$. Then $L\simeq GL_2$ with simple root $\beta_1$.
 Then $L^{ss}\simeq SL_2$.

Recall the setting in \S\ref{hol} for type $G_2$. Then, $\sigma_k$ is the quaternionic discrete series of $G(\Bbb R)$ with the minimal $K_\infty$-type $\tau_k$ for  $k\ge 1$. 

%A maximal compact subgroup $K_\infty$ of $G(\R)$ is isomorphic to $SU(2)\times SU(2)/\{\pm 1\}$.Write $\tau_k$ for the irreducible representation $\mathrm{Sym}^{2k}\boxtimes \mathbf{1}_{SU(2)}$ of $K_\infty$, where $\mathrm{Sym}^{2k}$ is the symmetric representation of $SU(2)$ and $\mathbf{1}_{SU(2)}$ is the trivial representation of $SU(2)$. 

\subsection{Modular forms on $G_2$}

Gan-Gross-Savin \cite{GGS} developed the theory of modular forms on $G$: 
A modular form $\phi$ on $G$ of weight $k$ is a function on $G(\Bbb A)=G(\Bbb R)\times G(\Bbb A_{\bf f})$ such that
\begin{enumerate}
  \item $\phi$ is left invariant under $G(\Bbb Q)$;
  \item $\phi$ is right-invariant under some open compact subgroup of $G(\Bbb A_{\bf f})$;
  \item $\phi$ is annihilated by an ideal of finite codimension in $Z(\frak g)$;
  \item $\phi$ is of moderate growth on $G(\Bbb R)$;
  \item $\phi$ generates a representation $\pi=\pi_\infty\otimes \otimes'_p \pi_p$ of $G(\Bbb A)$ such that $\pi_\infty=\sigma_k$.
  \end{enumerate}  

Choose an open compact subgroup $K$ of $G(\A_{\bf f})$.
Let $S_k(K)$ be the space of cusp forms of weight $k$ and level $K$, that is, $S_k(K)$ consists of modular forms on $G$ which is right $K$-invariant and satisfy the cuspidal condition. 
Let $H_k(K)$ be the set of Hecke eigenforms which form a basis of $S_k(K)$. 
For $F\in H_k(K)$, let $\pi_F$ be the cuspidal automorphic representation of $G$ attached to $F$.
The family $H_k(K)$ agrees with $\mathcal{F}(\sigma_k,\tau_k,K)$ in Section \ref{hol}.
Hence, it follows from Theorem \ref{thm:STW} that
\[
|H_k(K)|= v(K)\, d(\sigma_k) +O_{\varepsilon,K}\left( k^{4+\varepsilon} \right) ,\quad k\to\infty.
\]
The factor $v(K)\, d(\sigma_k)$ is explicitly given as
\begin{equation}\label{eq:leadingg2}
v(K)\, d(\sigma_k)=\frac 1{2^9\cdot 3^4\cdot 5\cdot 7}(k-1)k(2k-1)(3k-2)(3k-1) \times \frac{[K_0:K\cap K_0]}{[K:K\cap K_0]}
\end{equation}
by using the explicit formula of  Dalal \cite{Da} for $|H_k(K_0)|$, where $K_0\coloneqq G(\widehat{\Bbb Z})$ is the maximal compact subgroup. 
%When $K$ is the maximal compact subgroup, namely, $K=G(\widehat{\Bbb Z})$,  gave an explicit formula for $|H_k(K)|$:$$|H_k(K)|=\frac 1{2^9\cdot 3^4\cdot 5\cdot 7}(k-1)k(2k-1)(3k-2)(3k-1)+O(k^3).$$
%For a general compact subgroup $K$, we have \cite{STW},
%$$|H_k(K)|=\frac 1{{\rm vol}(K)}(k-1)k(2k-1)(3k-2)(3k-1)+O(k^3).
%$$

\subsection{Sato-Tate measure}

Under the identification $M\simeq GL_2$, let $t=t(e^{i\theta_1},e^{i\theta_2})$ be the torus element in ${}^L G$. Then
\begin{eqnarray*}
&& \beta_1^{\vee}(t)=e^{i(\theta_1-\theta_2)},\quad \beta_6^{\vee}(t)=e^{i(-\theta_1+2\theta_2)},\quad \beta_2^{\vee}(t)=e^{i(2\theta_1-\theta_2)},\\
&& \beta_3^{\vee}(t)=e^{i\theta_1},\quad \beta_4^{\vee}(t)=e^{i(\theta_1+\theta_2)},\quad \beta_5^{\vee}(t)=e^{i\theta_2}.
\end{eqnarray*}

The Sato-Tate measure is
$$\mu_\infty^{\rm ST}=\frac 1{48\pi^2}\left| (1-e^{i(\theta_1-\theta_2)})(1-e^{i(-\theta_1+2\theta_2)})(1-e^{i(2\theta_1-\theta_2)})(1-e^{i\theta_1})(1-e^{i\theta_2})(1-e^{i(\theta_1+\theta_2)})\right|^2 \, d\theta_1 d\theta_2.
$$

Let $r: \widehat G\longrightarrow GL_7(\Bbb C)$ be the degree 7 standard representation.
Its weights are $0,\, \pm \beta_1^{\vee},\, \pm \beta_3^{\vee},\, \pm \beta_5^{\vee}$ (short roots).
Let $\pi_F=\pi_\infty\otimes \otimes_p' \pi_p$ so that $\pi_p$ is unramified for $p\notin S$. Let

$$L^S(s,\pi_F,r)=\sum_{n=1}^\infty a_{F,r}(n)n^{-s}.
$$
Then 
$$a_{F,r}(p)=1+e^{i\theta_{1,p}}+e^{-i\theta_{1,p}}+
e^{i\theta_{2,p}}+
e^{-i\theta_{2,p}}+
e^{i(\theta_{1,p}-\theta_{2,p})}+
e^{-i(\theta_{1,p}-\theta_{2,p})}.
$$
Then
$$p_r:=1+e^{i\theta_{1}}+e^{-i\theta_{1}}+
e^{i\theta_{2}}+
e^{-i\theta_{2}}+
e^{i(\theta_{1}-\theta_{2})}+
e^{-i(\theta_{1}-\theta_{2})},
$$
is the irreducible character.
Set
\[
x_1=e^{i\frac{\theta_1+\theta_2}{2}}+e^{-i\frac{\theta_1+\theta_2}{2}},\quad x_2=e^{i\frac{\theta_1-\theta_2}{2}}+e^{-i\frac{\theta_1-\theta_2}{2}}.
\]
Then 
$$
p_r= - 1+ x_1x_2+x_2^2.
$$
Also by trigonometric identities, [Note that the Jacobian is $\frac 2{\sqrt{(4-x_1^2)(4-x_2^2)}}.$]

$$\mu_\infty^{\rm ST}=\frac 1{6\pi^2}(4-x_1^2)^\frac 12(4-x_2^2)^\frac 12(x_1-x_2)^2(x_1+3x_2-x_2^3)^2\, dx_1dx_2.
$$

So we have
$$\int_{\Omega} p_r \mu_\infty^{\rm ST}=0,\quad \int_{\Omega} p_{r}^2 \mu_\infty^{\rm ST}=1,
$$
where $\Omega=[-2,2]^2/S_2$. Hence by Theorem \ref{thm:hol},
we have the central limit theorem for $G_2$:

\begin{thm} For any bounded continuous function $\phi: \Bbb R\rightarrow \Bbb R$,
if $\frac {\log k}{\log x}\to\infty$ as $x\to\infty$,
\begin{equation*}
\frac 1{|H_k(K)|} \sum_{F\in H_k(K)} \phi\left(\frac {\sum_{p\leq x} a_{F,r}(p)}{\sqrt{\pi(x)}}\right)
\longrightarrow \frac 1{\sqrt{2\pi}}\int_{-\infty}^\infty \phi(t)e^{-\frac {t^2}2}\, dt, \quad x\to\infty.
\end{equation*}
\end{thm}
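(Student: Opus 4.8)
The plan is to deduce this statement as an immediate special case of Theorem \ref{thm:hol} applied to the split group $G$ of type $G_2$, with the test function $h_p$ chosen so that $\widehat{h_p}=p_r=\chi_\lambda$, where $\lambda$ is the highest weight of the degree $7$ standard representation $r$. First I would verify that this $\chi_\lambda$ satisfies the three bulleted conditions imposed at the start of Section \ref{sec:gencls}: the constant term condition $c_0=0$ holds since $\lambda\neq 0$ (equivalently $\chi_\lambda$ is not constant, as exhibited by $p_r=-1+x_1x_2+x_2^2$); self-duality of $\pi_\lambda$ holds because $G_2$ has $-1$ in its Weyl group, so every irreducible representation is self-dual (in particular $r$, being the standard $7$-dimensional representation, is self-dual); and the normalization $\sum_\lambda c_\lambda^2=1$ holds trivially because there is a single nonzero coefficient $c_\lambda=1$. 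Then by Corollary \ref{chi-lambda-1} we get the two moment identities $\int_\Omega p_r\,\mu^{\mathrm{ST}}_\infty=0$ and $\int_\Omega p_r^2\,\mu^{\mathrm{ST}}_\infty=1$, which are exactly the input needed.

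Next I would identify the family: the space $H_k(K)$ of quaternionic cusp forms of weight $k$ and level $K$ coincides with $\mathcal{F}(\sigma_k,\tau_k,K)$ from Section \ref{hol} (this identification is recorded just after the definition of $H_k(K)$), and the normalizing factor $v(K)\,d(\sigma_k)$ agrees with $|H_k(K)|$ up to the lower-order error $O_{\varepsilon,K}(k^{4+\varepsilon})$ by Theorem \ref{thm:STW}. Since $a_{F,r}(p)=\mathrm{Tr}\,r(t_p)=\widehat{h_p}(\pi_p)=a_F(h_p)$ for $p\notin S$, the sum $\sum_{p\le x}a_{F,r}(p)$ is literally $\sum_{p\le x}a_F(h_p)$ in the notation of Theorem \ref{thm:hol}. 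Applying Theorem \ref{thm:hol} with $G$ of type $G_2$ (which is covered, as it is an exceptional group of type $G_2$), under the hypothesis $\frac{\log k}{\log x}\to\infty$, gives
\[
\frac{1}{v(K)\,d(\sigma_k)}\sum_{F\in H_k(K)}\phi\!\left(\frac{\sum_{p\le x}a_{F,r}(p)}{\sqrt{\pi(x)}}\right)\longrightarrow \frac{1}{\sqrt{2\pi}}\int_{-\infty}^\infty\phi(t)e^{-t^2/2}\,dt,\qquad x\to\infty.
\]
Finally I would replace the normalizing constant $v(K)\,d(\sigma_k)$ by $|H_k(K)|$: since $\phi$ is bounded and $|H_k(K)|=v(K)\,d(\sigma_k)(1+O(k^{-1}))$ by \eqref{eq:leadingg2} and Theorem \ref{thm:STW}, the two averages differ by a quantity tending to $0$ as $k\to\infty$ (hence as $x\to\infty$ under our hypothesis), so the limit is unchanged.

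The only genuine computational content beyond invoking the cited results is the verification of the two moment identities in \eqref{Langlands} for $p_r$, and this has already been carried out in the preceding subsection by the explicit change of variables to $(x_1,x_2)$, the resulting expression $\mu^{\mathrm{ST}}_\infty=\frac{1}{6\pi^2}(4-x_1^2)^{1/2}(4-x_2^2)^{1/2}(x_1-x_2)^2(x_1+3x_2-x_2^3)^2\,dx_1dx_2$, and Corollary \ref{chi-lambda-1}. So in fact there is no remaining obstacle: the theorem follows by assembling Theorem \ref{thm:hol}, Corollary \ref{chi-lambda-1}, Theorem \ref{thm:STW}, and the identification $H_k(K)=\mathcal{F}(\sigma_k,\tau_k,K)$. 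The one point requiring a line of care is confirming that the self-duality hypothesis of Section \ref{sec:gencls} is met — this is where the remark about $-1\in W$ for $G_2$ is used — and confirming $c_0=0$, i.e. that $p_r$ has no constant-function component as an element of $A^W$, which is clear since $p_r=\chi_\lambda$ with $\lambda\ne 0$.
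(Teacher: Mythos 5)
Your proof is correct and follows essentially the same route as the paper: verify that $p_r=\chi_\lambda$ for the irreducible degree-$7$ standard representation satisfies the hypotheses of Section~\ref{sec:gencls} (self-duality via $-1\in W$ for type $G_2$, $c_0=0$, trivial normalization), obtain the moment identities from Corollary~\ref{chi-lambda-1}, and then invoke Theorem~\ref{thm:hol} with $H_k(K)=\mathcal F(\sigma_k,\tau_k,K)$. You have additionally supplied the small bookkeeping step of replacing the normalizing factor $v(K)\,d(\sigma_k)$ by $|H_k(K)|$ via Theorem~\ref{thm:STW} and the boundedness of $\phi$, which the paper leaves implicit.
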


\end{document}